\title{Log del Pezzo surfaces with large volumes}
\author{Kento Fujita} 
\date{\today}
\subjclass[2010]{Primary 14J26; Secondary 14E30}
\keywords{del Pezzo surface, rational surface, extremal ray.}
\address{Research Institute for Mathematical Sciences, 
Kyoto University, Kyoto 606-8502 Japan}
\email{fujita@kurims.kyoto-u.ac.jp}
\newcommand{\pr}{\mathbb{P}}
\newcommand{\Z}{\mathbb{Z}}
\newcommand{\Q}{\mathbb{Q}}
\newcommand{\R}{\mathbb{R}}
\newcommand{\C}{\mathbb{C}}
\newcommand{\F}{\mathbb{F}}
\newcommand{\Pic}{\operatorname{Pic}}
\newcommand{\discrep}{\operatorname{discrep}}
\newcommand{\mult}{\operatorname{mult}}
\newcommand{\coeff}{\operatorname{coeff}}
\newcommand{\sI}{\mathcal{I}}
\newcommand{\sC}{\mathcal{C}}
\newcommand{\sO}{\mathcal{O}}
\newcommand{\sF}{\mathcal{F}}
\newcommand{\dm}{\mathfrak{m}}
\newtheorem{thm}{Theorem}[section]
\newtheorem{lemma}[thm]{Lemma}
\newtheorem{proposition}[thm]{Proposition}
\newtheorem{corollary}[thm]{Corollary}
\newtheorem{claim}[thm]{Claim}
\theoremstyle{definition}
\newtheorem{definition}[thm]{Definition}
\newtheorem{remark}[thm]{Remark}
\newtheorem{example}[thm]{Example}
\newtheorem*{ack}{Acknowledgments} 
\newtheorem*{nott}{Notation and terminology}
\begin{document}

\maketitle 

\begin{abstract}
We classify all of the log del Pezzo surfaces $S$ of index $a$ 
such that the volume $(-K_S^2)$ is larger than or equal to $2a$. 
\end{abstract}


\section{Introduction}\label{intro_section}

The purpose of this paper is to classify all of the log del Pezzo surfaces of fixed 
index with large anti-canonical volumes. 
A normal projective variety $S$ is called a \emph{log Fano variety} if 
$S$ is log-terminal and the anti-canonical divisor $-K_S$ is ample. 
We call $2$-dimensional log Fano varieties as \emph{log del Pezzo surfaces}. 
For a log Fano 
variety $S$, the \emph{index} of $S$ is defined as the smallest positive integer $a$ 
such that $-aK_S$ is Cartier. Log del Pezzo surfaces $S$ with fixed index $a$ 
have been treated by many authors in various viewpoint. 

We recall the viewpoint in terms of the classification problem of log del Pezzo surfaces. 
If $a=1$ then all such $S$ are classified by several authors, 
see \cite{brenton, demazure, HW}; 
if $a=2$ then all such $S$ are classified in \cite{AN1, AN2, AN3, N}; and 
if $a=3$ then all such $S$ are classified in \cite{FY}. 
However, it is not realistic to classify \emph{all} of the log del Pezzo surfaces of 
(fixed) index $a\geq 4$ since even for the case $a=3$ there are $300$ types of such 
log del Pezzo surfaces (see \cite{FY}). 

On the other hand, the problem of the boundedness for log Fano varieties of fixed 
index $a$ is also important in the area of minimal model program. 
The problem is equivalent to bound the anti-canonical volume $(-K_S^n)$ for such $S$ 
if the characteristic of the base field is zero. 
Such problem is called the Batyrev conjecture. See \cite{borisov, HMX}. 
We remark that the authors in \cite{HMX} showed that the Baryrev conjecture is true 
(in any dimension, if the characteristic of the base field is zero). 
Nowadays, a generalized version of the Batyrev conjecture, called the BAB conjecture, 
is considered by many authors, see \cite{BB, alexeev, AM, lai, jiang}. 
The BAB conjecture is true for log del Pezzo surfaces 
but is open in higher-dimensional case. Note that, as a corollary of \cite{jiang}, 
any log del Pezzo surface $S$ of index $a\geq 2$ satisfies that 
$(-K_S^2)\leq(2a^2+4a+2)/a$ and equality holds if and only if $S$ is isomorphic to 
the weighted projective plane $\pr(1,1,2a)$. 

In this paper, we classify the log del Pezzo surfaces $S$ of index $a\geq 4$ such that 
$(-K_S^2)\geq 2a$. The motivation is 
related to both the classification problem and the boundedness 
problem. We note that log del Pezzo surfaces $S$ of index $a\leq 3$ are completely 
classified.

\begin{thm}\label{mainthm}
Let $S$ be a log del Pezzo surface of index $a\geq 4$. 
Then $(-K_S^2)\geq 2a$ holds if and only if $S$ is isomorphic to the 
log del Pezzo surface associated to the $a$-fundamental multiplet of length 
$b$ with $b=\lfloor(a+1)/2\rfloor$ whose type is one of the following: 
\begin{enumerate}
\renewcommand{\theenumi}{\arabic{enumi}}
\renewcommand{\labelenumi}{(\theenumi)}
\item\label{mainthm1}
$\langle${\rm O}$\rangle_a$\,\,\,
$\left((-K_S^2)=(2a^2+4a+2)/a\right)$,
\item\label{mainthm2}
$\langle${\rm I}$\rangle_a$\,\,\,
$\left((-K_S^2)=(2a^2+3a+2)/a\right)$,
\item\label{mainthm3}
$\langle${\rm II}$\rangle_a$\,\,\,
$\left((-K_S^2)=(2a^2+2a+2)/a\right)$,
\item\label{mainthm4}
$\langle${\rm III}$\rangle_a$\,\,\,
$\left((-K_S^2)=(2a^2+a+2)/a\right)$,
\item\label{mainthm5}
$\langle${\rm A}$\rangle_5$ \, $($if $a=5)$ \,\,
$\left((-K_S^2)=54/5\right)$,
\item\label{mainthm6}
$\langle${\rm IV}$\rangle_a$\,\,\,
$\left((-K_S^2)=(2a^2+2)/a\right)$,
\item\label{mainthm7}
$\langle${\rm B}$\rangle_4$ \, $($if $a=4)$ \,\,
$\left((-K_S^2)=8\right)$,
\item\label{mainthm8}
$\langle${\rm C}$\rangle_4$ \, $($if $a=4)$ \,\,
$\left((-K_S^2)=8\right)$.
\end{enumerate}
We describe the above types in Section \ref{ex_section1}. 
\end{thm}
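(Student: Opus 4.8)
The plan is to combine the dictionary between log del Pezzo surfaces of index $a$ and $a$-fundamental multiplets, set up in the preceding sections, with a finite case analysis forced by the inequality $(-K_S^2)\geq 2a$.

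First I would recall the structural input. Writing $\sigma\colon\tilde S\to S$ for the minimal resolution, one has $K_{\tilde S}+\Delta=\sigma^*K_S$ with $\Delta$ the effective $\sigma$-exceptional $\Q$-divisor determined by the (non-positive) discrepancies, so that $\tilde S$ is a smooth rational surface and $(-K_S^2)=(K_{\tilde S}+\Delta)^2$. The $a$-fundamental multiplet records the configuration of exceptional curves together with the fundamental divisor; its length $b$ measures the size of this configuration; and from the earlier sections one has, on the one hand, that every $a$-fundamental multiplet has length $b\geq\lfloor(a+1)/2\rfloor$ (a shorter configuration cannot carry index exactly $a$), and, on the other hand, an explicit formula expressing $(-K_S^2)$ in terms of the numerical data of the multiplet.

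The heart of the argument is the implication $(-K_S^2)\geq 2a\ \Rightarrow\ b=\lfloor(a+1)/2\rfloor$. Since $b\geq\lfloor(a+1)/2\rfloor$ always holds, the content is the upper bound $b\leq\lfloor(a+1)/2\rfloor$. I would obtain it by feeding the volume formula into $(-K_S^2)\geq 2a$: one shows that the largest volume achievable by a multiplet of length $b$ is a strictly decreasing function of $b$, equal to the global maximum $(2a^2+4a+2)/a$ of \cite{jiang} (attained by $\pr(1,1,2a)$) when $b=\lfloor(a+1)/2\rfloor$, and already $<2a$ once $b=\lfloor(a+1)/2\rfloor+1$. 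Since $(2a^2+4a+2)/a-2a=4+2/a<5$, the available margin is small, so this comes down to bounding sharply how much one extra exceptional curve, interacting with the fundamental divisor, can contribute to $(K_{\tilde S}+\Delta)^2$. I expect this sharp bound—good enough to exclude length $\lfloor(a+1)/2\rfloor+1$ outright rather than only length $\geq\lfloor(a+1)/2\rfloor+c$ for some larger $c$—to be the main obstacle, with the bookkeeping most delicate for the small indices $a\in\{4,5\}$, where the numerics are least constrained.

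Once $b=\lfloor(a+1)/2\rfloor$ is fixed there are only finitely many $a$-fundamental multiplets of that length, and I would enumerate them—organized by the birational type of $\tilde S$ over its minimal model ($\pr^2$ or a Hirzebruch surface) and by the shapes of the chains of exceptional curves—evaluate $(-K_S^2)$ for each from the formula, and keep exactly those with $(-K_S^2)\geq 2a$. This should produce the five families $\langle\mathrm{O}\rangle_a,\langle\mathrm{I}\rangle_a,\langle\mathrm{II}\rangle_a,\langle\mathrm{III}\rangle_a,\langle\mathrm{IV}\rangle_a$, present for all $a\geq 4$, together with the sporadic $\langle\mathrm{A}\rangle_5$ when $a=5$ and $\langle\mathrm{B}\rangle_4,\langle\mathrm{C}\rangle_4$ when $a=4$—the latter two sitting exactly on the boundary $(-K_S^2)=2a=8$, which is precisely the sort of edge case that the enumeration must be careful not to drop. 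The converse, that each of the eight listed types satisfies $(-K_S^2)\geq 2a$, is then immediate from the same formula, and gives the volumes recorded in the statement.
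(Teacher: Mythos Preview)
Your proposal rests on two structural claims that are not in the paper and are not correct as stated. First, you assert that ``from the earlier sections one has that every $a$-fundamental multiplet has length $b\geq\lfloor(a+1)/2\rfloor$ (a shorter configuration cannot carry index exactly $a$).'' No such lower bound is established in Sections~2--4; Corollary~\ref{bfcor} only gives $1\leq b\leq a-1$. In the actual proof, \emph{both} inequalities $h_0\geq a+1$ (Claim~\ref{h0claim}) and $h_0\leq a+1$ (the remainder of \S\ref{proof2_section}), hence $b=\lfloor(a+1)/2\rfloor$, are consequences of the hypothesis $(-K_S^2)\geq 2a$, not of the index alone. Second, your monotonicity heuristic---that the maximal volume at length $b$ is strictly decreasing and drops below $2a$ already at $b=\lfloor(a+1)/2\rfloor+1$---is not available: the volume formula $a(-K_S^2)=-nh_0+2h_0+2h-\sum_i i\deg\Delta_i$ involves $n$, $h_0$, $h$ simultaneously, and none of these is fixed by $b$ alone.

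The paper's argument is considerably more concrete than your outline. One first rules out $M_b\simeq\pr^2$ (Lemma~\ref{Mblem}), so $M_b=\F_n$ and $L_b\sim h_0\sigma+hl$ with $b=\lfloor h_0/2\rfloor$. Then \S\ref{proof2_section} pins down $h_0=a+1$ by separate numerical arguments for $h_0\leq a$ and $h_0\geq a+2$, each using $a(-K_S^2)\geq 2a^2$ together with the constraints $h\geq nh_0$, $\coeff_\sigma E_b\leq a-1$, and the identities in Proposition~\ref{fundprop}~\eqref{fundprop3}. Next \S\ref{proof3_section} forces $\sigma\leq E_b$, $h\geq 2a^2-2a-2$, and $2a-4\leq n\leq 2a$. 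Only after the triple $(h_0,h,n)$ is so tightly constrained does a genuine case split occur (\S\S\ref{proof4_section}--\ref{proof6_section}), according to the value of $(n+2)a-h\in\{0,1,\geq 2\}$, and the local Lemmas~\ref{FS1}--\ref{FS3} are used repeatedly to control the $\Delta_i$. Your proposed ``enumerate all multiplets of length $\lfloor(a+1)/2\rfloor$'' would still be an infinite problem without first fixing $n$ and $h$; the substance of the proof is precisely this reduction.
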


By Theorem \ref{mainthm} and the arguments in 
Section \ref{ex_section}, we have the following result. We note that 
we do not use the results in \cite{jiang} in order to prove Corollary \ref{maincor}. 

\begin{corollary}\label{maincor}
Let $S$ be a log del Pezzo surface of index $a\geq 2$. 
Then $(-K_S^2)>2(a+1)$ holds if and only if $S$ is isomorphic to one of the following 
$($We describe the following varieties in Section \ref{ex_section2}.$):$
\begin{enumerate}
\renewcommand{\theenumi}{\arabic{enumi}}
\renewcommand{\labelenumi}{(\theenumi)}
\item\label{maincor1}
$\pr(1,1,2a)$ \,\,\, $\left((-K_S^2)=(2a^2+4a+2)/a\right)$, 
\item\label{maincor2}
$S_{{\rm I}, a}$ \,\,\, $\left((-K_S^2)=(2a^2+3a+2)/a\right)$, 
\item\label{maincor3}
$S_{{\rm II}_1, a}$ \,\,\, $\left((-K_S^2)=(2a^2+2a+2)/a\right)$, 
\item\label{maincor4}
$S_{{\rm II}_2, a}$ \,\,\, $\left((-K_S^2)=(2a^2+2a+2)/a\right)$, 
\item\label{maincor5}
$\pr(1,1,3)$ \, $($if $a=3)$ \,\, $\left((-K_S^2)=25/3\right)$. 
\end{enumerate}
We remark that all of them are toric varieties. 
\end{corollary}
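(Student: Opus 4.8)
The plan is to derive Corollary \ref{maincor} from Theorem \ref{mainthm} together with the low-index classifications. First I would observe that the inequality $(-K_S^2)>2(a+1)$ is strictly stronger than $(-K_S^2)\geq 2a$ when $a\geq 4$: indeed $2(a+1)=2a+2>2a$, so for $a\geq 4$ only the types in Theorem \ref{mainthm} with volume exceeding $2a+2$ can occur. Comparing the listed volumes, $\langle\mathrm{O}\rangle_a$ gives $(2a^2+4a+2)/a=2a+4+2/a>2a+2$, $\langle\mathrm{I}\rangle_a$ gives $2a+3+2/a>2a+2$, and $\langle\mathrm{II}\rangle_a$ gives $2a+2+2/a>2a+2$, whereas $\langle\mathrm{III}\rangle_a$ gives $2a+1+2/a$, which is $>2a+2$ only when $2/a>1$, i.e. never for $a\geq 4$; similarly $\langle\mathrm{IV}\rangle_a$, $\langle\mathrm{A}\rangle_5$, $\langle\mathrm{B}\rangle_4$, $\langle\mathrm{C}\rangle_4$ all have volume $\leq 2a+1+2/a<2a+2$. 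Hence for $a\geq 4$ exactly the types $\langle\mathrm{O}\rangle_a$, $\langle\mathrm{I}\rangle_a$, $\langle\mathrm{II}\rangle_a$ survive, and I would identify these via the descriptions in Section \ref{ex_section1} with $\pr(1,1,2a)$, $S_{\mathrm{I},a}$, and the pair $S_{\mathrm{II}_1,a}$, $S_{\mathrm{II}_2,a}$ respectively (the type $\langle\mathrm{II}\rangle_a$ corresponding to two non-isomorphic surfaces with the same volume $(2a^2+2a+2)/a$).

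Next I would handle the remaining low-index cases $a=2$ and $a=3$ directly from the classifications cited in the introduction, namely \cite{AN1, AN2, AN3, N} for $a=2$ and \cite{FY} for $a=3$. For $a=2$ the bound reads $(-K_S^2)>6$; scanning the classification list one finds precisely $\pr(1,1,4)$ with $(-K_S^2)=9$, $S_{\mathrm{I},2}$ with $(-K_S^2)=8$, and $S_{\mathrm{II}_1,2}$, $S_{\mathrm{II}_2,2}$ with $(-K_S^2)=7$ (note $\pr(1,1,3)$ does not have index $2$). For $a=3$ the bound reads $(-K_S^2)>8$; from \cite{FY} the surfaces with volume exceeding $8$ are $\pr(1,1,6)$ with $(-K_S^2)=25/3$... wait, $25/3<9$, so I must instead list $\pr(1,1,6)$ with $(-K_S^2)=(2\cdot 9+12+2)/3=32/3>8$, $S_{\mathrm{I},3}$ with $(-K_S^2)=29/3>8$, $S_{\mathrm{II}_1,3}$, $S_{\mathrm{II}_2,3}$ with $(-K_S^2)=26/3>8$, and additionally $\pr(1,1,3)$ with $(-K_S^2)=25/3>8$, which is the sporadic case (\ref{maincor5}) occurring only for $a=3$. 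I would present this as a short table and check that no other surface in the $a=2,3$ lists has volume above the threshold.

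Finally I would verify that all five families in the statement are toric: $\pr(1,1,2a)$ is a weighted projective plane hence toric, $\pr(1,1,3)$ likewise, and $S_{\mathrm{I},a}$, $S_{\mathrm{II}_1,a}$, $S_{\mathrm{II}_2,a}$ are toric by their explicit fan descriptions given in Section \ref{ex_section2} (they arise as toric surfaces with a single quotient singularity or a pair thereof). The main obstacle I anticipate is bookkeeping: one must be careful that the "fundamental multiplet" description in Theorem \ref{mainthm} is correctly matched with the down-to-earth models $S_{\mathrm{I},a}$, $S_{\mathrm{II}_1,a}$, $S_{\mathrm{II}_2,a}$ of Section \ref{ex_section2}, and that the index of each listed surface really equals $a$ (so that, e.g., $\pr(1,1,3)$ is counted only at $a=3$ and not spuriously at other indices); the remark that \cite{jiang} is not needed is then justified because the whole argument rests only on Theorem \ref{mainthm} and the already-available classifications for $a\leq 3$.
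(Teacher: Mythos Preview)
Your proposal is correct and follows essentially the same route as the paper: the paper derives Corollary~\ref{maincor} ``immediately'' from Theorem~\ref{mainthm} together with the explicit toric descriptions in Section~\ref{ex_section2} (Examples~\ref{dEO}--\ref{dEII2}) and, for $a\in\{2,3\}$, the lists reproduced in Remark~\ref{ind23rmk} drawn from \cite{N} and \cite{FY}. Your volume filtering for $a\geq 4$, the identification of $\langle\mathrm{O}\rangle_a$, $\langle\mathrm{I}\rangle_a$, $\langle\mathrm{II}\rangle_a$ with $\pr(1,1,2a)$, $S_{\mathrm{I},a}$, $S_{\mathrm{II}_1,a}\cup S_{\mathrm{II}_2,a}$, and the low-index check all match the paper's argument (including the appearance of $\pr(1,1,3)$ only at $a=3$).
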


The strategy for the classification is essentially same as the strategy in \cite{FY} 
based on the earlier work in \cite{N}. First, we reduce the 
classification problem of log del Pezzo surfaces of index $a$ 
to the classification problem of the pair of its minimal resolution and $(-a)$ times 
the discriminant divisor. We call such pair an \emph{$a$-basic pair} 
(see Section \ref{dPb_section}). Next, by contracting $(-1)$-curves very carefully, 
we get an \emph{$a$-fundamental multiplet} from an $a$-basic pair 
(see Section \ref{mult_section}). See also the flowcharts in \cite[\S1]{FY}. 
From the assumption $(-K_S^2)\geq 2a$, the structure of the associated 
$a$-fundamental multiplet has very special structure. 
Thus we can get the multiplets in Section \ref{ex_section1}.

\begin{ack}
The author is partially supported by a JSPS Fellowship for Young Scientists. 
\end{ack}

\begin{nott}
We work in the category of algebraic (separated and of finite type) scheme over a 
fixed algebraically closed field $\Bbbk$ of arbitrary characteristic. 
A \emph{variety} means a reduced and irreducible algebraic scheme. A \emph{surface} 
means a two-dimensional variety. 

For a normal variety $X$, we say that $D$ is a 
\emph{$\Q$-divisor} (resp.\ \emph{divisor} or \emph{$\Z$-divisor}) 
if $D$ is a finite sum $D=\sum a_iD_i$ where $D_i$ are prime divisors and 
$a_i\in\Q$ (resp.\ $a_i\in\Z$). 
For a $\Q$-divisor $D=\sum a_iD_i$, the value $a_i$ is denoted by $\coeff_{D_i}D$ and 
set $\coeff D:=\{a_i\}_i$. For an effective $\Q$-divisor or a scheme $D$ on $X$, 
let $|D|$ be the support of $D$. 
A normal variety $X$ is called \emph{log-terminal} if the canonical divisor $K_X$ is 
$\Q$-Cartier and the discrepancy $\discrep(X)$ of $X$ is bigger than $-1$ 
(see \cite[\S 2.3]{KoMo}).
For a proper birational morphism $f\colon Y\to X$ between normal varieties 
such that both $K_X$ and $K_Y$ are $\Q$-Cartier, we set 
\[
K_{Y/X}:=\sum_{E_0\subset Y \,\,f\text{-exceptional}}a(E_0, X)E_0, 
\]
where $a(E_0, X)$ is the discrepancy of $E_0$ with respects to $X$ 
(see \cite[\S 2.3]{KoMo}). (We note that 
if $aK_X$ and $aK_Y$ are Cartier for $a\in\Z_{>0}$, then 
$aK_{Y/X}$ is a $\Z$-divisor.)

For a nonsingular surface $S$ and a projective curve $C$ which is a closed subvariety 
of $S$, the curve $C$ is called a \emph{$(-n)$-curve} if $C$ is a nonsingular rational 
curve and $(C^2)=-n$. For a birational map $M\dashrightarrow S$ 
between normal surfaces and for a curve $C\subset S$, 
the strict transform of $C$ on $M$ is denoted by $C^M$. 
If the birational map is of the form $M_i\dashrightarrow M_j$, then the strict transform 
of $C\subset M_j$ on $M_i$ is denoted by $C^i$. 

Let $S$ be a nonsingular surface and let $D=\sum a_jD_j$ be an effective divisor on $S$ 
$(a_j>0)$ such that $|D|$ is simple normal crossing and $D_i$, $D_j$ are intersected 
at most one point. (It is sufficient in our situation.) 
The \emph{dual graph} of $D$ is defined as follows. A vertex corresponds 
to a component $D_j$. Let $v_j$ be the vertex corresponds to $D_j$. The 
$v_i$ and $v_j$ are joined by a (simple) line if and only if $D_i$, $D_j$ are intersected. 
In the dual graphs of divisors, a vertex corresponding to $(-n)$-curve is expressed 
as \textcircled{\tiny $n$}. 
On the other hand, an arbitrary irreducible curve is expressed by the symbol 
{\large$\oslash$} when it is not necessary a $(-n)$-curve.

Let $\F_n\to\pr^1$ be a Hirzebruch surface $\pr_{\pr^1}(\sO\oplus\sO(n))$ of 
degree $n$ with the $\pr^1$-fibration. 
A section $\sigma\subset\F_n$ with $(\sigma^2)=-n$ 
is called a \emph{minimal section}. If $n>0$, then such $\sigma$ is unique. 
We usually denote a fiber of $\F_n\to\pr^1$ by $l$. 

For a real number $t$, let $\lfloor t\rfloor$ be the greatest integer not grater than $t$. 
\end{nott}

\section{Elimination of subschemes}\label{elim_section}

In this section, we recall the results in \cite[\S 2]{N} (see also \cite[\S 2]{F}). 
Let $X$ be a nonsingular surface and $\Delta$ be a zero-dimensional subscheme 
of $X$. The defining ideal sheaf of $\Delta$ is denoted by $\sI_{\Delta}$.

\begin{definition}\label{nu1dfn}
Let $P$ be a point of $\Delta$. 
\begin{enumerate}
\renewcommand{\theenumi}{\arabic{enumi}}
\renewcommand{\labelenumi}{(\theenumi)}
\item\label{nu1dfn1}
Let 
$\nu_P(\Delta):=\max\{\nu\in\Z_{>0} \, | \, \sI_{\Delta}\subset\dm_P^\nu\}$,
where $\dm_P$ is the maximal ideal sheaf in $\sO_X$ defining $P$.
If $\nu_P(\Delta)=1$ for any $P\in\Delta$, then we say that $\Delta$ \emph{satisfies 
the $(\nu1)$-condition}.
\item\label{nu1dfn2}
The \emph{multiplicity} $\mult_P\Delta$ of $\Delta$ at $P$ is given by 
the length of the Artinian local ring $\sO_{\Delta, P}$.
\item\label{nu1dfn3}
The \emph{degree} $\deg\Delta$ of $\Delta$ is 
given by $\sum_{P\in\Delta}\mult_P\Delta$.
\end{enumerate}
\end{definition}

\begin{definition}\label{elimdfn}
Assume that $\Delta$ satisfies the $(\nu1)$-condition. 
Let $V\to X$ be the blowing up along $\Delta$. 
The \emph{elimination} of $\Delta$ is the birational projective morphism 
$\psi\colon Y\to X$ which is defined as the composition of the minimal resolution 
$Y\to V$ of $V$ and the morphism $V\to X$. 
For any divisor $E$ on $X$ and for any positive integer $s$, we set 
$E^{\Delta,s}:=\psi^*E-sK_{Y/X}$. 
\end{definition}

\begin{proposition}[{\cite[Proposition 2.9]{N}}]\label{Nelprop}
\begin{enumerate}
\renewcommand{\theenumi}{\arabic{enumi}}
\renewcommand{\labelenumi}{(\theenumi)}
\item\label{Nelprop1}
Assume that the subscheme $\Delta$ satisfies the $(\nu1)$-condition 
and let $\psi\colon Y\to X$ 
be the elimination of $\Delta$. 
Then the anti-canonical divisor $-K_Y$ is $\psi$-nef. 
More precisely, for any $P\in\Delta$ with $\mult_P\Delta=k$, 
the set-theoretic inverse image $\psi^{-1}(P)$ is the 
straight chain $\sum_{j=1}^k\Gamma_{P,j}$ of 
nonsingular rational curves and the dual graph of $\psi^{-1}(P)$ is the following: 
\begin{center}
    \begin{picture}(150, 50)(0, 45)
    \put(-6, 57){\textcircled{\tiny $2$}}
    \put(0, 70){\makebox(0, 0)[b]{$\Gamma_{P,1}$}}
    \put(5, 60){\line(1, 0){30}}
    \put(34, 57){\textcircled{\tiny $2$}}
    \put(40, 70){\makebox(0, 0)[b]{$\Gamma_{P,2}$}}
    \put(45, 60){\line(1, 0){20}} 
    \put(67, 60){\line(1, 0){2}}
    \put(71, 60){\line(1, 0){2}}
    \put(75, 60){\line(1, 0){2}}    
    \put(79, 60){\line(1, 0){21}}
    \put(99, 57){\textcircled{\tiny $2$}}
    \put(105, 70){\makebox(0, 0)[b]{$\Gamma_{P,k-1}$}}
    \put(110, 60){\line(1, 0){30}}
    \put(139, 57){\textcircled{\tiny $1$}}
    \put(145, 70){\makebox(0, 0)[b]{$\Gamma_{P,k}$}}
    \end{picture}
\end{center}
\item\label{Nelprop2}
Conversely, for a proper birational morphism $\psi\colon Y\to X$
between nonsingular surfaces such that $-K_Y$ is $\psi$-nef, 
the morphism $\psi$ 
is the elimination of $\Delta$ which satisfies the $(\nu1)$-condition 
defined by the ideal $\sI_\Delta:=\psi_*\sO_Y(-K_{Y/X})$.
\end{enumerate}
\end{proposition}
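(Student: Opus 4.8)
The plan is to prove \eqref{Nelprop1} by reducing to an explicit local model, and then to deduce \eqref{Nelprop2} by induction on the length of a factorisation of $\psi$ into blow-ups, feeding the local computation of \eqref{Nelprop1} into the induction.

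For \eqref{Nelprop1} I would first observe that the assertion is local around each $P\in\Delta$ and is unaffected by passing to the formal completion, so one may assume $\Delta$ is concentrated at $P$ with $\mult_P\Delta=k$. Since $\nu_P(\Delta)=1$, the ideal $\sI_\Delta$ contains a function $x$ with non-vanishing differential at $P$; completing $x$ to a regular system of parameters $x,y$ exhibits $\sO_{\Delta,P}$ as a length-$k$ quotient of the discrete valuation ring $\sO_{X,P}/(x)$, so that $\sI_\Delta=(x,y^k)$ locally, i.e.\ $\Delta$ is curvilinear. It then suffices to treat the model $\sI_\Delta=(x,y^k)$, and the next step would be to check directly that the tower of $k$ blow-ups
\[
Y=Y_k\longrightarrow Y_{k-1}\longrightarrow\cdots\longrightarrow Y_1\longrightarrow Y_0=X,
\]
in which $Y_1\to X$ blows up $P$ and each $Y_{j+1}\to Y_j$ blows up the point where the strict transform of $\{x=0\}$ meets the newest exceptional curve, makes $(x,y^k)\cdot\sO_Y$ invertible while $(x,y^k)\cdot\sO_{Y_j}$ is not invertible for $j<k$; hence this tower is the minimal resolution of the blow-up of $\Delta$, that is, the elimination $\psi\colon Y\to X$ (equivalently, one resolves the rational double point of type $A_{k-1}$ on the blow-up of $(x,y^k)$). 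Carrying self-intersections through the tower then yields that $\psi^{-1}(P)=\sum_{j=1}^{k}\Gamma_{P,j}$ is the stated chain with $(\Gamma_{P,j}^2)=-2$ for $j<k$ and $(\Gamma_{P,k}^2)=-1$, together with $K_{Y/X}=\sum_{j=1}^{k}j\,\Gamma_{P,j}$; in particular $(K_{Y/X}\cdot\Gamma_{P,j})=0$ for $j<k$ and $(K_{Y/X}\cdot\Gamma_{P,k})=-1$. Because $-K_Y=-\psi^{*}K_X-K_{Y/X}$ and $\psi^{*}K_X$ is numerically trivial on $\psi$-exceptional curves, one gets $(-K_Y\cdot\Gamma_{P,j})=-(K_{Y/X}\cdot\Gamma_{P,j})\in\{0,1\}$, so $-K_Y$ is $\psi$-nef; the same computation also gives $\psi_{*}\sO_Y(-K_{Y/X})=(x,y^k)=\sI_\Delta$, which I will use in the converse.

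For \eqref{Nelprop2} I would factor $\psi$ as a composition of $n$ blow-ups of points (possible since $X$ and $Y$ are nonsingular surfaces) and argue by induction on $n$, the case $n=0$ being trivial. Writing $\psi=\psi'\circ b$ with $b\colon Y\to Y'$ the last blow-up, of a point $Q\in Y'$, one has for every $\psi'$-exceptional curve $C'$ (with strict transform $C$ on $Y$) that $0\le(-K_Y\cdot C)=(-K_{Y'}\cdot C')-\mult_Q(C')$, using $-K_Y=b^{*}(-K_{Y'})-E$ with $E$ the $b$-exceptional curve; hence $-K_{Y'}$ is $\psi'$-nef and the inductive hypothesis applies to $\psi'$, so each fibre of $\psi'$ is a chain of shape $(-2,\dots,-2,-1)$. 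By adjunction every component of every fibre of $\psi$ is a $(-1)$- or $(-2)$-curve, so blowing up $Q$ can neither turn a $(-2)$-curve into a $(-3)$-curve nor simultaneously push two meeting curves below $-2$. Going through the possible positions of $Q$ — off $\Exc(\psi')$, a non-nodal point of a single exceptional curve, or a node of the configuration — the only admissible cases are that $Q\notin\Exc(\psi')$, in which case $\psi$ gains one extra reduced centre, or that $Q$ is a non-nodal point of the $(-1)$-curve at the end of some chain, in which case that chain grows by one at its free end. In both cases the centre of $\psi$ is again a curvilinear, hence $(\nu1)$, subscheme $\Delta'\subset X$ whose elimination is $\psi$; and by the identity recorded in \eqref{Nelprop1} one has $\psi_{*}\sO_Y(-K_{Y/X})=\sI_{\Delta'}$, so $\Delta'$ is precisely the subscheme defined in the statement.

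I expect the main obstacle to be the local computation behind \eqref{Nelprop1}: it must pin down, simultaneously and compatibly, the chain shape of $\psi^{-1}(P)$, the self-intersections, the coefficients of $K_{Y/X}$, and the equality $\psi_{*}\sO_Y(-K_{Y/X})=\sI_\Delta$, and the cleanest way to get all four at once is to realise the elimination of $(x,y^k)$ as the explicit tower of blow-ups above and to track everything through it. Given that model case, \eqref{Nelprop2} becomes a routine induction whose only delicate point is the case analysis for the centre $Q$, and even that is essentially forced by the requirement that no fibre component may have self-intersection less than $-2$.
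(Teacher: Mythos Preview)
The paper does not supply its own proof of this proposition: it is quoted verbatim from \cite[Proposition~2.9]{N} and no argument is given. So there is nothing in the present paper to compare your attempt against.

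That said, your plan is sound and would constitute a complete proof. For \eqref{Nelprop1}, the reduction to the local model $\sI_\Delta=(x,y^k)$ via the $(\nu1)$-condition is exactly right: once $x\in\sI_\Delta\setminus\dm_P^2$, the quotient $\sO_{X,P}/(x)$ is a DVR and $\sO_{\Delta,P}$ is its length-$k$ quotient, forcing $\sI_\Delta=(x,y^k)$ formally. The tower of $k$ blow-ups you describe makes the ideal principal (an easy chart computation) and contracts to the blow-up of $(x,y^k)$ by the universal property; the curves it contracts over the $A_{k-1}$ point of $V$ are precisely the $(-2)$-curves $\Gamma_{P,1},\dots,\Gamma_{P,k-1}$, so it is the minimal resolution. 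Your intersection computations and the identification $K_{Y/X}=\sum j\,\Gamma_{P,j}$ then give both the $\psi$-nefness of $-K_Y$ and $\psi_*\sO_Y(-K_{Y/X})=(x,y^k)$. For \eqref{Nelprop2}, factoring off a $(-1)$-curve $E\subset Y$ and using $(-K_Y\cdot C)=(-K_{Y'}\cdot C')-\mult_Q C'\geq 0$ correctly propagates $\psi'$-nefness of $-K_{Y'}$; the case analysis for $Q$ is forced, as you say, by the fact that every $\psi$-exceptional curve has self-intersection $\geq -2$ (adjunction on a smooth rational curve with $-K_Y\cdot C\geq 0$), so $Q$ is either off $\Exc(\psi')$ or a smooth point of the unique $(-1)$-curve in one chain. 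Your final identification of the resulting curvilinear subscheme with $\psi_*\sO_Y(-K_{Y/X})$ via the local computation of \eqref{Nelprop1} closes the loop.

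One small remark: when you say ``the last blow-up'' you implicitly use that any birational morphism of smooth surfaces factors so that the final step contracts a $(-1)$-curve on $Y$; this is Castelnuovo's criterion and is of course standard, but it is worth saying explicitly since the factorisation into point blow-ups is not unique.
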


\begin{definition}\label{Gammadfn}
Under the assumption of Proposition \ref{Nelprop} \eqref{Nelprop1}, we always denote 
the exceptional curves of $\psi$ over $P$ by $\Gamma_{P, 1},\dots,\Gamma_{P, k}$. 
The order is determined as Proposition \ref{Nelprop} \eqref{Nelprop1}. 
\end{definition}

Now we see some examples of the dual graphs of $E^{\Delta, s}$.

\begin{example}[{\cite[Example 2.5]{F}}]\label{E1}
Assume that $\Delta$ satisfies the $(\nu1)$-condition such that $|\Delta|=\{P\}$.
Let $E=eC$ be a divisor on $X$ such that $P\in C$ and $C$ is 
nonsingular. Let $m:=\deg\Delta$ and $k:=\mult_P(\Delta\cap C)$. 
Then we have 
\[
E^{\Delta, s}=eC^Y+\sum_{i=1}^ki(e-s)\Gamma_{P,i}+\sum_{i=k+1}^m(ek-si)\Gamma_{P,i},
\]
where $\psi\colon Y\to X$ is the elimination of $\Delta$. 
Moreover, the dual graph of $\psi^{-1}(E)$ is the following: 
\begin{center}
    \begin{picture}(180, 70)(0, 10)
    \put(-6, 57){\textcircled{\tiny $2$}}
    \put(0, 70){\makebox(0, 0)[b]{$\Gamma_{P,1}$}}
    \put(5, 60){\line(1, 0){20}} 
    \put(27, 60){\line(1, 0){2}}
    \put(31, 60){\line(1, 0){2}}
    \put(35, 60){\line(1, 0){2}}    
    \put(39, 60){\line(1, 0){21}}
    \put(59, 57){\textcircled{\tiny $2$}}
    \put(65, 70){\makebox(0, 0)[b]{$\Gamma_{P,k}$}}
    \put(70, 60){\line(1, 0){20}} 
    \put(92, 60){\line(1, 0){2}}
    \put(96, 60){\line(1, 0){2}}
    \put(100, 60){\line(1, 0){2}}    
    \put(104, 60){\line(1, 0){21}}
    \put(124, 57){\textcircled{\tiny $2$}}
    \put(130, 70){\makebox(0, 0)[b]{$\Gamma_{P,m-1}$}}
    \put(135, 60){\line(1, 0){30}}
    \put(164, 57){\textcircled{\tiny $1$}}
    \put(170, 70){\makebox(0, 0)[b]{$\Gamma_{P,m}$}}
    \put(65, 54.5){\line(0, -1){24.95}}
    \put(65, 25){\makebox(0, 0){\large$\oslash$}}
    \put(85, 18){\makebox(0, 0)[b]{$C^Y$}}
    \end{picture}
\end{center}
\end{example}

\begin{example}{\cite[Example 2.6]{F}}\label{E2}
Assume that $\Delta$ satisfies the $(\nu1)$-condition such that $|\Delta|=\{P\}$.
Let $E=e_1C_1+e_2C_2$ be a non-zero effective divisor such that $C_1$ and $C_2$ 
are nonsingular and intersect transversally at a unique point $P=C_1\cap C_2$. 
Let $m:=\deg\Delta$ and $k_j:=\mult_P(\Delta\cap C_j)$. 
By \cite[Lemma 2.12]{N}, we may assume that $k_1=1$.  
Then we have
\[
E^{\Delta, s}=e_1C_1^Y+e_2C_2^Y+
\sum_{i=1}^{k_2}(i(e_2-s)+e_1)\Gamma_{P,i}+\sum_{i=k_2+1}^m(e_1+k_2e_2-is)\Gamma_{P,i},
\]
where $\psi\colon Y\to X$ is the elimination of $\Delta$. 
Moreover, the dual graph of $\psi^{-1}(E)$ is the following: 
\begin{center}
    \begin{picture}(220, 70)(0, 20)
    \put(0.5, 60){\makebox(0, 0){\large$\oslash$}}
    \put(0, 70){\makebox(0, 0)[b]{$C_1^Y$}}
    \put(5, 60){\line(1, 0){30}}
    \put(34, 57){\textcircled{\tiny $2$}}
    \put(40, 70){\makebox(0, 0)[b]{$\Gamma_{P,1}$}}
    \put(45, 60){\line(1, 0){20}} 
    \put(67, 60){\line(1, 0){2}}
    \put(71, 60){\line(1, 0){2}}
    \put(75, 60){\line(1, 0){2}}    
    \put(79, 60){\line(1, 0){21}}
    \put(99, 57){\textcircled{\tiny $2$}}
    \put(105, 70){\makebox(0, 0)[b]{$\Gamma_{P,k_2}$}}
    \put(110, 60){\line(1, 0){20}} 
    \put(132, 60){\line(1, 0){2}}
    \put(136, 60){\line(1, 0){2}}
    \put(140, 60){\line(1, 0){2}}    
    \put(144, 60){\line(1, 0){21}}
    \put(164, 57){\textcircled{\tiny $2$}}
    \put(170, 70){\makebox(0, 0)[b]{$\Gamma_{P,m-1}$}}
    \put(175, 60){\line(1, 0){30}}
    \put(204, 57){\textcircled{\tiny $1$}}
    \put(210, 70){\makebox(0, 0)[b]{$\Gamma_{P,m}$}}
    \put(105, 54.8){\line(0, -1){25}}
    \put(105, 25){\makebox(0, 0){\large$\oslash$}}
    \put(128, 18){\makebox(0, 0)[b]{$C_2^Y$}}
    \end{picture}
\end{center}
\end{example}

\section{Log del Pezzo surfaces}\label{dP_section}

We define the notion of log del Pezzo surfaces, 
$a$-basic pairs, and $a$-fundamental multiplets, 
and we see the correspondence among them.

\subsection{Log del Pezzo surfaces and $a$-basic pairs}\label{dPb_section}

\begin{definition}\label{dPdfn}
\begin{enumerate}
\renewcommand{\theenumi}{\arabic{enumi}}
\renewcommand{\labelenumi}{(\theenumi)}
\item\label{dPdfn1}
A normal projective surface $S$ is called a \emph{log del Pezzo surface} 
if $S$ is log-terminal and the anti-canonical divisor $-K_S$ is an ample $\Q$-Cartier 
divisor.
\item\label{dPdfn2}
Let $S$ be a log del Pezzo surface. The \emph{index} of $S$ is defined as 
$\min\{a\in\Z_{>0} | -aK_S\text{ is Cartier}\}$.
\end{enumerate}
\end{definition}

\begin{remark}\label{dPrmk}
Any log del Pezzo surface is a rational surface by \cite[Proposition 3.6]{N}. 
In particular, the Picard group $\Pic(S)$ of $S$ is a finitely generated and torsion-free 
Abelian group. 
\end{remark}

\begin{definition}[{\cite[Definition 3.3]{FY}}]\label{basdfn}
Fix $a\geq 2$. 
A pair $(M, E_M)$ is called an \emph{$a$-basic pair} if the following conditions are 
satisfied: 
\begin{enumerate}
\renewcommand{\theenumi}{\arabic{enumi}}
\renewcommand{\labelenumi}{($\sC$\theenumi)}
\item\label{basdfn1}
$M$ is a nonsingular projective rational surface.
\item\label{basdfn2}
$E_M$ is a nonzero effective divisor on $M$ such that 
$\coeff E_M\subset\{1,\dots,a-1\}$ 
and $|E_M|$ is simple normal crossing.
\item\label{basdfn3}
A divisor $L_M\sim -aK_M-E_M$ (called the \emph{fundamental divisor} of 
$(M, E_M)$) satisfies that $K_M+L_M$ is nef and $(K_M+L_M\cdot L_M)>0$. 
\item\label{basic_dfn4}
For any irreducible component $C\leq E_M$, $(L_M\cdot C)=0$ holds. 
\end{enumerate}
\end{definition}

We see the correspondence between log del Pezzo surfaces and $a$-basic pairs.

\begin{proposition}[{\cite[Proposition 3.4]{FY}}]\label{dPbprop}
Fix $a\geq 2$. 
\begin{enumerate}
\renewcommand{\theenumi}{\arabic{enumi}}
\renewcommand{\labelenumi}{(\theenumi)}
\item\label{dPbprop1}
Let $S$ be a non-Gorenstein log del Pezzo surface such that $-aK_S$ is Cartier. 
Let $\alpha\colon M\to S$ be the minimal resolution 
of $S$ and let $E_M:=-aK_{M/S}$. 
Then $(M, E_M)$ is an $a$-basic pair and the divisor $\alpha^*(-aK_S)$ is the 
fundamental divisor of $(M, E_M)$. 
\item\label{dPbprop2}
Let $(M, E_M)$ be an $a$-basic pair and $L_M$ be 
the fundamental divisor of $(M, E_M)$.
Then there exists a projective and birational morphism $\alpha\colon M\to S$ 
such that $S$ is a non-Gorenstein log del Pezzo surface with $-aK_S$ Cartier and 
$L_M\sim\alpha^*(-aK_S)$ holds. 
Moreover, the morphism $\alpha$ is the minimal resolution of $S$. 
\end{enumerate}
In particular, under the correspondence between $S$ and $(M, E_M)$, 
we have $a(-K_S^2)=(1/a)(L_M^2)$, where $L_M$ is the fundamental divisor.
\end{proposition}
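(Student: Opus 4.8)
The plan is to treat \eqref{dPbprop1} and \eqref{dPbprop2} as mutually inverse constructions; the final equality is then formal, since $L_M\sim\alpha^*(-aK_S)$ gives $(L_M^2)=((-aK_S)^2)=a^2(K_S^2)$, so $(1/a)(L_M^2)=a(-K_S^2)$.

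\emph{Direction \eqref{dPbprop1}.} Let $\alpha\colon M\to S$ be the minimal resolution and write $K_{M/S}=\sum_i a_iE_i$ over the $\alpha$-exceptional primes; since $S$ is log-terminal and $\alpha$ is minimal, $-1<a_i\le 0$. As $-aK_S$ is Cartier, $aK_{M/S}=aK_M-a\alpha^*K_S$ is integral, so $E_M:=-aK_{M/S}$ is a $\Z$-divisor with $\coeff E_M\subset\{0,1,\dots,a-1\}$; discarding the zero components gives $\coeff E_M\subset\{1,\dots,a-1\}$, and $E_M\ne 0$ because a non-Gorenstein $S$ has a non-canonical singularity, forcing some $a_i<0$. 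The exceptional locus of the minimal resolution of a log-terminal (hence rational) surface singularity is a disjoint union of trees of smooth rational curves meeting transversally, so $|E_M|$ is simple normal crossing with any two components meeting in at most one point; with the rationality of $M$ (Remark~\ref{dPrmk}) this gives ($\sC$1) and ($\sC$2) of Definition~\ref{basdfn}. Put $L_M:=\alpha^*(-aK_S)$; then $-aK_M=\alpha^*(-aK_S)-aK_{M/S}=L_M+E_M$, so $L_M\sim-aK_M-E_M$ is the fundamental divisor, and ($\sC$4) holds since $(\alpha^*(-aK_S)\cdot C)=(-aK_S\cdot\alpha_*C)=0$ for $C\le E_M$. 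For ($\sC$3): rewriting $K_M+L_M=\alpha^*(-(a-1)K_S)+K_{M/S}$ and using $(K_{M/S}\cdot\alpha^*D)=0$ gives $(K_M+L_M\cdot L_M)=a(a-1)(K_S^2)>0$, while for the nefness of $K_M+L_M$ one has $(K_M+L_M\cdot E_j)=(K_M\cdot E_j)=-2-(E_j^2)\ge 0$ on $\alpha$-exceptional primes, and, $L_M$ being nef, any $(K_M+L_M)$-negative extremal ray of $\NE(M)$ is $K_M$-negative, hence by the cone theorem for smooth projective surfaces spanned by a $(-1)$-curve, a fibre of a conic-bundle structure, or — only when $M\cong\pr^2$ — a line. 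The line case forces $S\cong\pr^2$, which is Gorenstein; a $(-1)$-curve $E$ has $(L_M\cdot E)=(-aK_S\cdot\alpha_*E)\ge 1$, so $(K_M+L_M\cdot E)\ge 0$; and for a conic-bundle fibre $C$ one shows $(L_M\cdot C)=(-aK_S\cdot\alpha_*C)\ge 2$, the value $1$ being excluded by passing to a relatively minimal model $\F_m$, on which $L_M$ would be numerically $\sigma+ml$ and $E_M=-aK_M-L_M$ could not be supported on the single $\alpha$-exceptional curve.

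\emph{Direction \eqref{dPbprop2}.} Given an $a$-basic pair $(M,E_M)$ with fundamental divisor $L_M$, I would first show $L_M$ is nef: for $C\le E_M$ this is ($\sC$4), and for irreducible $C\not\le E_M$, combining $(K_M+L_M\cdot C)\ge 0$ with $L_M=-aK_M-E_M$ yields $(a-1)(L_M\cdot C)\ge(E_M\cdot C)\ge 0$. Since a nef divisor on a surface has nonnegative self-intersection, and $(L_M^2)=0$ would make $L_M$ a positive multiple of a fibre class $f$ of a fibration $M\to\pr^1$ — forcing $(K_M\cdot f)=(K_M+L_M\cdot f)\ge 0$, hence fibres of genus $\ge 1$, which is impossible for genus $\ge 2$ on a rational surface and gives $(K_M+L_M\cdot L_M)=0$ in the elliptic case, against ($\sC$3) — we get $(L_M^2)>0$. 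Thus $L_M$ is nef and big, every $L_M$-trivial curve has negative self-intersection by the Hodge index theorem, and so $L_M$ is semiample; let $\alpha\colon M\to S$ be the morphism defined by $|mL_M|$ for $m\gg 0$, with $S$ normal; it is birational since $(L_M^2)>0$ and contracts exactly the $L_M$-trivial curves. Then $L_M=\alpha^*A$ with $A$ ample (Nakai--Moishezon: $(A^2)=(L_M^2)>0$, and $(A\cdot B)=(L_M\cdot C)>0$ for the strict transform $C$ of any curve $B\subset S$). Writing $K_M=\alpha^*K_S+K_{M/S}$ with $K_{M/S}$ supported on the exceptional locus, the identity $L_M+E_M=-aK_M$ forces $E_M=-aK_{M/S}$, hence $\alpha^*(-aK_S)=-aK_M-E_M=L_M$; so $-aK_S$ is Cartier, $A=-aK_S$, and $-K_S$ is ample. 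Moreover $S$ is log-terminal, as the discrepancies of $\alpha$ are the coefficients of $K_{M/S}=-\tfrac{1}{a}E_M$, each in $(-1,0)$ on $|E_M|$ and $0$ elsewhere, and non-Gorenstein, since $E_M\ne 0$ makes $K_{M/S}$ non-integral, so $K_S$ is not Cartier; and $\alpha$ is the minimal resolution, because a $(-1)$-curve $C\le E_M$ would have $(K_M+L_M\cdot C)=(K_M\cdot C)+(L_M\cdot C)=-1+0<0$ against ($\sC$3), while a contracted curve $C\not\le E_M$ has $(K_M\cdot C)=0$ (from ($\sC$3) and $(E_M\cdot C)\ge 0$) and $(C^2)<0$, hence is a $(-2)$-curve.

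The only genuinely non-formal point is, in direction \eqref{dPbprop1}, the nefness of $K_M+L_M$ along non-exceptional curves — concretely, excluding a conic-bundle fibre $C$ with $(L_M\cdot C)=1$; the rest is bookkeeping with the discrepancies and conditions ($\sC$2)--($\sC$4). This is precisely \cite[Proposition 3.4]{FY}, which rests on the analysis in \cite[\S3]{N}.
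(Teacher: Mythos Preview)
The paper does not prove this proposition; it is quoted from \cite[Proposition~3.4]{FY}, so there is no in-paper argument to compare against. Your sketch captures the shape of both directions correctly, and you rightly isolate the nefness of $K_M+L_M$ on non-exceptional curves as the one substantive point in \eqref{dPbprop1}. Two steps in \eqref{dPbprop2}, however, are not yet arguments.

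First, the deduction of $(L_M^2)>0$ via ``$(L_M^2)=0$ would make $L_M$ a fibre class'' is unjustified: a nef divisor of square zero on a rational surface need not be semiample without further input, so you cannot simply produce the fibration. The fix is immediate and avoids this detour: from $a(K_M+L_M)\sim(a-1)L_M-E_M$ and ($\sC$4) one reads off $a\,(K_M+L_M\cdot L_M)=(a-1)(L_M^2)$, so ($\sC$3) gives $(L_M^2)>0$ directly. Second, ``every $L_M$-trivial curve has negative self-intersection, and so $L_M$ is semiample'' is an assertion, not a proof. Negative-definiteness (via Hodge index) plus Artin's contractibility criterion does give a contraction $\alpha\colon M\to S$ with $S$ normal, but you must still show that $L_M$ descends to a \emph{Cartier} divisor on $S$; numerical triviality on the fibres is not enough by itself. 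In characteristic zero this is the basepoint-free theorem applied to the klt pair $(M,\tfrac{1}{a}E_M)$, noting that $L_M-(K_M+\tfrac{1}{a}E_M)=\tfrac{a+1}{a}L_M$ is nef and big; since the present paper works over a field of arbitrary characteristic, one also needs the surface version in positive characteristic (or an argument via rationality of the singularities). This descent step is exactly the content being deferred to \cite{N} and \cite{FY}.
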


\begin{definition}\label{associate_dfn}
Let $a\geq 2$. 
\begin{enumerate}
\renewcommand{\theenumi}{\arabic{enumi}}
\renewcommand{\labelenumi}{(\theenumi)}
\item\label{associate_dfn1}
For a log del Pezzo surface $S$ of index $a$, the corresponding 
$a$-basic pair $(M, E_M)$ which is given in 
Proposition \ref{dPbprop} \eqref{dPbprop1} is called 
the \emph{associated $a$-basic pair} of $S$. 
\item\label{associate_dfn1}
For an $a$-basic pair $(M, E_M)$, the corresponding log del Pezzo surface $S$ 
which is given in Proposition \ref{dPbprop} \eqref{dPbprop2} 
is called the \emph{associated log del Pezzo surface} of $(M, E_M)$. 
We note that $a$ is divisible by the index of $S$. 
\end{enumerate}
\end{definition}

We discuss that when the log del Pezzo surface $S$ associated to an $a$-basic pair 
$(M, E_M)$ is of index $a$.

\begin{lemma}\label{2alem}
Let $S$ be a log del Pezzo surface of index $a\geq 2$ and $(M, E_M)$ be the 
associated $a$-basic pair. Pick any irreducible component $C\leq E_M$ and set 
$e:=\coeff_CE_M$ and $d:=-(C^2)$. Then $2\leq d\leq 2a/(a-e)$. 
In particular, $d\leq 2a$.
\end{lemma}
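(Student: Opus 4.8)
The plan is to exploit the numerical constraints packaged into the definition of an $a$-basic pair, specifically conditions $(\sC\ref{basdfn3})$ and $(\sC\ref{basic_dfn4})$, applied to the single component $C$. First I would record the two facts that $L_M\sim -aK_M-E_M$ and $(L_M\cdot C)=0$, so that $(-aK_M\cdot C)=(E_M\cdot C)$. Since $|E_M|$ is simple normal crossing with each pair of components meeting in at most one point, I can split $(E_M\cdot C)=e(C^2)+(E_M-eC\cdot C)$, where the second term is a nonnegative integer (it counts, with the coefficients of $E_M$, the components of $E_M$ other than $C$ that meet $C$). Writing $d=-(C^2)$, this gives $(-aK_M\cdot C)=-ed+(\text{nonneg.})$, i.e. $(-aK_M\cdot C)\geq -ed$.

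Next I would bring in $K_M+L_M$. By $(\sC\ref{basdfn3})$ the divisor $K_M+L_M$ is nef, hence $((K_M+L_M)\cdot C)\geq 0$. Now $K_M+L_M\sim K_M+(-aK_M-E_M)=-(a-1)K_M-E_M$, so $((K_M+L_M)\cdot C)=-(a-1)(K_M\cdot C)-(E_M\cdot C)$. Combining with $(-aK_M\cdot C)=(E_M\cdot C)$ I can eliminate the intersection number $(E_M\cdot C)$: substituting gives $((K_M+L_M)\cdot C)=-(a-1)(K_M\cdot C)+a(K_M\cdot C)=(K_M\cdot C)$. So nefness of $K_M+L_M$ already forces $(K_M\cdot C)\geq 0$; combined with the adjunction inequality this will pin down $d$.

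For the actual bounds: $C$ is an irreducible curve on the nonsingular rational surface $M$, and since $C\leq E_M$ has positive coefficient while $L_M$ is nef and big with $(L_M\cdot C)=0$, the curve $C$ must have negative self-intersection (otherwise $C$ would be nef or movable and could not lie in the null locus of a big nef divisor in this configuration — more cleanly, $(L_M^2)>0$ and $(L_M\cdot C)=0$ force $(C^2)<0$ by the Hodge index theorem), so $d\geq 1$; and from $2p_a(C)-2=(C^2)+(K_M\cdot C)=-d+(K_M\cdot C)$ together with $(K_M\cdot C)\geq 0$ and $p_a(C)\geq 0$ I get $(K_M\cdot C)=d-2+2p_a(C)\geq d-2$, and rationality considerations (any log del Pezzo surface is rational, and such negative curves on a rational surface obtained as a minimal resolution are smooth rational) give $p_a(C)=0$, hence $(K_M\cdot C)=d-2$ and $d\geq 2$. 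For the upper bound, I return to $(E_M\cdot C)=(-aK_M\cdot C)=a(K_M\cdot C)^{-}$... more precisely $(E_M\cdot C)=-a(K_M\cdot C)$? No: $(-aK_M\cdot C)=-a(K_M\cdot C)=-a(d-2)$, wait this is negative for $d\geq 3$, contradiction — so I must be careful: $(-aK_M\cdot C)=(E_M\cdot C)\geq 0$ forces $(K_M\cdot C)\leq 0$, i.e. $d-2\leq 0$?? That would give $d=2$ always, which is false. The resolution is that $(L_M\cdot C)=0$ does \emph{not} give $(-aK_M\cdot C)=(E_M\cdot C)$ with the sign I claimed — I need $L_M\sim -aK_M-E_M$ so $0=(L_M\cdot C)=-a(K_M\cdot C)-(E_M\cdot C)$, hence $(E_M\cdot C)=-a(K_M\cdot C)=-a(d-2)=a(2-d)$. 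Since $(E_M\cdot C)=-ed+(\text{nonneg. integer }n_C)$ with $n_C\geq 0$, we get $a(2-d)=-ed+n_C$, so $n_C=a(2-d)+ed=2a-(a-e)d\geq 0$, which rearranges exactly to $d\leq 2a/(a-e)$. And $d\geq 2$ from the adjunction/rationality step above. Finally $e\leq a-1$ gives $2a/(a-e)\leq 2a$, proving $d\leq 2a$.

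The main obstacle is the clean bookkeeping in the third paragraph — getting the signs right in the adjunction computation and justifying $p_a(C)=0$ (smoothness of the negative curve $C$), which requires invoking that $M$ is a minimal resolution of a log-terminal surface so that its exceptional configuration consists of smooth rational curves, or alternatively arguing directly via the Hodge index theorem that a reduced irreducible curve $C$ with $(C^2)<0$ contracted by the big nef $L_M$ on a rational surface with the given SNC hypothesis must be a smooth rational curve. Once $p_a(C)=0$ and the sign conventions are fixed, the inequality $2a-(a-e)d=n_C\geq 0$ is immediate from $(\sC\ref{basic_dfn4})$ and the SNC hypothesis $(\sC\ref{basdfn2})$, and everything else is formal.
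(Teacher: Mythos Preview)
Your proposal is correct and, once the mid-argument sign confusion is resolved, lands on exactly the computation the paper uses: from $(L_M\cdot C)=0$ and $L_M\sim -aK_M-E_M$ one gets $(E_M\cdot C)=a(2-d)$, and $(E_M\cdot C)\geq e(C^2)=-ed$ then yields $d\leq 2a/(a-e)$. The only minor difference is that for $d\geq 2$ the paper simply cites Proposition~\ref{dPbprop} (minimal resolutions contract no $(-1)$-curves, and exceptional curves over log-terminal points are smooth rational), whereas you recover it from the nefness of $K_M+L_M$ together with $p_a(C)=0$; both routes rest on the same fact that $C$ is a smooth rational curve, which you correctly attribute to the log-terminal resolution structure.
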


\begin{proof}
By Proposition \ref{dPbprop}, $d\geq 2$. We know that 
$-ed\leq (E_M\cdot C)=(-aK_M\cdot C)=a(2-d)$ since $(L_M\cdot C)=0$, 
where $L_M$ is the fundamental divisor. 
Thus $d\leq 2a/(a-e)$. Since $e\leq a-1$, we have $d\leq 2a$. 
\end{proof}

\begin{corollary}\label{indacor}
Let $(M, E_M)$ be an $a$-basic pair with $a\geq 2$. Assume that there exists an 
irreducible component $C\leq E_M$ such that either $(C^2)<-a$, or 
$\coeff_CE_M$ and $a$ are coprime. Then the associated log del Pezzo surface $S$ 
is of index $a$. 
\end{corollary}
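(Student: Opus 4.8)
The plan is to show that the index of the associated log del Pezzo surface $S$, which a priori only divides $a$, must in fact equal $a$. Write $b$ for the index of $S$, so $b \mid a$; the goal is to rule out $b < a$. The key observation is that $(M, E_M)$ is the associated $a$-basic pair of $S$ \emph{as well as} being obtainable from the associated $b$-basic pair, and the exceptional structure of the minimal resolution $\alpha \colon M \to S$ is intrinsic to $S$, independent of the multiplier $a$ or $b$. Concretely, $E_M = -aK_{M/S}$ and, applying Proposition \ref{dPbprop}\eqref{dPbprop1} with $b$ in place of $a$, the divisor $-bK_{M/S}$ is also a $\Z$-divisor (since $-bK_S$ is Cartier and $aK_{M/S}$ is a $\Z$-divisor by the discrepancy bound, one checks $bK_{M/S}$ is a $\Z$-divisor too). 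Thus for every $\alpha$-exceptional prime divisor $E_0$ we have $a\,a(E_0,S) \in \Z$ and $b\,a(E_0,S) \in \Z$, so the discrepancy $a(E_0,S)$ is a rational number whose denominator divides $\gcd(a,b) = b$.

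Now I would use the two hypotheses in turn. First suppose $\coeff_C E_M =: e$ and $a$ are coprime, where $C \leq E_M$ is the given irreducible component; since $C$ is $\alpha$-exceptional, $C$ appears in $E_M = -aK_{M/S}$ with coefficient $e = -a\,a(C,S)$, so $a(C,S) = -e/a$. Because $a(C,S)$ must have denominator dividing $b$, and $\gcd(e,a)=1$ forces the fraction $-e/a$ to be in lowest terms with denominator exactly $a$, we conclude $a \mid b$, hence $a = b$ and $S$ has index $a$. Second suppose instead $(C^2) < -a$. Here I would invoke Lemma \ref{2alem} applied to the \emph{$b$-basic pair} associated to $S$: if $S$ had index $b$, then letting $(N, E_N)$ be its associated $b$-basic pair, every irreducible component of $E_N$ — in particular the strict transform of $C$, which is again a $(-d)$-curve with $d = -(C^2)$ since $\alpha$ is the common minimal resolution — would satisfy $d \leq 2b \leq 2a/2 = a$ when $b \leq a/2$. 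Combined with $d > a$ this is a contradiction unless $b > a/2$, i.e.\ $b = a$ (using $b \mid a$). For the remaining possibility $b$ with $a/2 < b < a$ and $b \mid a$ — which cannot occur since a proper divisor of $a$ is at most $a/2$ — there is nothing to check, so again $S$ has index $a$.

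The main obstacle I anticipate is the bookkeeping around integrality of $K_{M/S}$ at the various multipliers: one must be careful that "$-bK_S$ Cartier" genuinely forces $-bK_{M/S}$ to be a $\Z$-divisor, which follows because $\alpha^*(-bK_S)$ is a Cartier (hence $\Z$-) divisor and $-bK_{M/S} = K_M - \alpha^*(bK_S) \cdot$ — more precisely $bK_M = \alpha^*(bK_S) + bK_{M/S}$ with $bK_M$ and $\alpha^*(bK_S)$ both $\Z$-divisors. Once that is pinned down, the coprimality case is essentially a one-line denominator argument, and the $(C^2) < -a$ case is a direct application of Lemma \ref{2alem} to the minimal index, since any divisor of $a$ that is not $a$ itself is $\leq a/2$. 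I would present the proof by first establishing the denominator statement for discrepancies as a preliminary, then splitting into the two cases; each case is short. No serious analytic or combinatorial difficulty is expected — the content is entirely in correctly identifying that the resolution data is index-independent and translating the hypotheses into a divisibility constraint on the index.
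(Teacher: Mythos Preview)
Your proposal is correct and follows essentially the same approach as the paper: let $a'$ (your $b$) be the index of $S$, observe that $a' \mid a$ and that $\coeff_C E_M$ is divisible by $a/a'$ (equivalently, the discrepancy $a(C,S)$ has denominator dividing $a'$), then use coprimality to force $a/a'=1$ in the first case and Lemma~\ref{2alem} applied to the $a'$-basic pair to force $a' > a/2$ in the second. The paper's proof is a three-line version of exactly this; your write-up simply unpacks the integrality of $-a'K_{M/S}$ and the divisor-of-$a$ dichotomy more explicitly.
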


\begin{proof}
Let $a'$ be the index of $S$. Then $a'>1$ and $a$ is divisible by $a'$. Moreover, 
$\coeff_CE_M$ is divisible by $a/a'$. Thus the assertion follows from Lemma \ref{2alem}. 
\end{proof}

\subsection{$a$-fundamental multiplets}\label{mult_section}

In order to determine $a$-basic pairs, we define the notion of 
$a$-fundamental multiplets given in \cite[\S 1]{FY}.

\begin{definition}\label{funddfn}
Fix $a\geq 2$ and $1\leq i\leq a-1$. We define the following notion inductively. 
A multiplet $(M_i, E_i; \Delta_1,\dots,\Delta_i)$ is called an 
\emph{$a$-pseudo-fundamental multiplet of length $i$} if the following conditions are 
satisfied: 
\begin{enumerate}
\renewcommand{\theenumi}{\arabic{enumi}}
\renewcommand{\labelenumi}{($\sF$\theenumi)}
\item\label{funddfn1}
$M_i$ is a nonsingular projective rational surface 
and $E_i$ is a nonzero effective divisor on $M_i$. 
\item\label{funddfn2}
A divisor $L_i\sim-aK_{M_i}-E_i$ $($called the 
\emph{fundamental divisor}$)$ satisfies that 
$iK_{M_i}+L_i$ is nef and $((i+1)K_{M_i}+L_i\cdot\gamma)\geq 0$ for any $(-1)$-curve 
$\gamma\subset M_i$. 
\item\label{funddfn3}
$\Delta_i\subset M_i$ is a zero-dimensional subscheme which satisfies 
the $(\nu1)$-condition. 
\item\label{funddfn4}
Let $\pi_i\colon M_{i-1}\to M_i$ be the elimination of $\Delta_i$ and 
$E_{i-1}:=E_i^{\Delta_i, a-i}$. Then the following holds: 
\begin{itemize}
\item
If $i=1$, then $(M_0, E_0)$ is an $a$-basic pair. 
\item
If $i\geq 2$, then $(M_{i-1}, E_{i-1}; \Delta_1,\dots,\Delta_{i-1})$ is an 
$a$-pseudo-fundamental multiplet of length $i-1$. 
\end{itemize}
\end{enumerate}
Moreover, if $(i+1)K_{M_i}+L_i$ is not nef, then we call the multiplet an 
\emph{$a$-fundamental multiplet} of length $i$. 
\end{definition}

\begin{definition}\label{fundrmk_dfn}
\begin{enumerate}
\renewcommand{\theenumi}{\arabic{enumi}}
\renewcommand{\labelenumi}{(\theenumi)}
\item\label{fundrmk_dfn1}
Let $(M_i, E_i; \Delta_1,\dots,\Delta_i)$ be an $a$-pseudo-funda-mental multiplet. 
We call the $a$-basic pair $(M_0, E_0)$ (constructed from the multiplet inductively) 
as the \emph{associated $a$-basic pair}. 
\item\label{fundrmk_dfn2}
We sometimes call $a$-basic pairs as \emph{$a$-pseudo-fundamental multiplets 
of length zero} for convenience. 
\end{enumerate}
\end{definition}

The following proposition is important.

\begin{proposition}\label{fundprop}
Fix $a\geq 2$ and $0\leq i\leq a-1$. 
Let $(M_i, E_i; \Delta_1,\dots,\Delta_i)$ be an $a$-pseudo-fundamental multiplet of 
length $i$, $\pi_j\colon M_{j-1}\to M_j$ be the elimination of $\Delta_j$, 
$E_{j-1}:=E_j^{\Delta_j, a-j}$ and $L_j$ be the fundamental divisor of 
$(M_j, E_j; \Delta_1,\dots,\Delta_j)$ for any $0\leq j\leq i$.
\begin{enumerate}
\renewcommand{\theenumi}{\arabic{enumi}}
\renewcommand{\labelenumi}{(\theenumi)}
\item\label{fundprop1}
Assume that $(i+1)K_{M_i}+L_i$ is nef. Then $i\leq a-2$ and $iK_{M_i}+L_i$ is nef and big. 
Moreover, there exists a projective and birational morphism 
$\pi_{i+1}\colon M_i\to M_{i+1}$ between nonsingular surfaces 
such that the following conditions are satisfied: 
\begin{itemize}
\item
There exists a zero-dimensional subscheme $\Delta_{i+1}\subset M_{i+1}$ which 
satisfies the $(\nu1)$-condition such that $\pi_{i+1}$ is the elimination of $\Delta_{i+1}$. 
\item
Set $E_{i+1}:=(\pi_{i+1})_*E_i$ and $L_{i+1}:=(\pi_{i+1})_*L_i$. Then 
$E_i=E_{i+1}^{\Delta_{i+1}, a-i-1}$ and $L_i=L_{i+1}^{\Delta_{i+1}, i+1}$. 
\item
$(M_{i+1}, E_{i+1}; \Delta_1,\dots,\Delta_{i+1})$ is an $a$-pseudo-fundamental 
multiplet of length $i+1$ and $L_{i+1}$ is the fundamental divisor. 
\end{itemize}
\item\label{fundprop2}
Assume that $(i+1)K_{M_i}+L_i$ is not nef, that is, the multiplet 
$(M_i, E_i; \Delta_1,\dots,\Delta_i)$ is an $a$-fundamental multiplet. 
Then $M_i$ is isomorphic to either $\pr^2$ or $\F_n$, and 
$((i+1)K_{M_i}+L_i\cdot l)<0$, where $l$ is a line $($if $M_i\simeq\pr^2)$; a fiber 
$($if $M_i\simeq\F_n)$. 
\item\label{fundprop3}
$L_i$ is nef and big. Moreover, we have the following: 
\begin{itemize}
\item
$(L_i\cdot E_i)=\sum_{j=1}^ij(a-j)\deg\Delta_j$.
\item
$(K_{M_i}+L_i\cdot L_i)-(K_{M_0}+L_0\cdot L_0)=\sum_{j=1}^ij(j-1)\deg\Delta_j$.
\item
$(L_i\cdot C)=\sum_{j=1}^ij\deg(\Delta_j\cap C^j)$ for any nonsingular component 
$C\leq E_i$.
\item
$(1/a)(L_0^2)=(-K_{M_i}\cdot L_i)-\sum_{j=1}^ij\deg\Delta_j$. 
\end{itemize}
\end{enumerate}
\end{proposition}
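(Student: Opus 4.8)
The plan is to prove Proposition \ref{fundprop} by induction on $i$, establishing all four parts simultaneously, since the inductive construction of $\pi_{i+1}$ in part \eqref{fundprop1} depends on knowing that the numerical properties of part \eqref{fundprop3} already hold at level $i$, and conversely the formulas in \eqref{fundprop3} at level $i$ are obtained by pushing forward or pulling back along the already-constructed $\pi_i$. The base case $i=0$ is essentially the content of the definition of an $a$-basic pair together with Proposition \ref{dPbprop}: here $L_0\sim -aK_{M_0}-E_0$ is nef with $(K_{M_0}+L_0\cdot L_0)>0$, so by the theory of the adjoint linear system on a rational surface (or directly, since $K_{M_0}+L_0$ is nef with positive self-intersection) $L_0$ is nef and big, and the four displayed identities are trivial (empty sums) except for the last, which reads $(1/a)(L_0^2)=(-K_{M_0}\cdot L_0)$; this follows by writing $(L_0^2)=(L_0\cdot(-aK_{M_0}-E_0))$ and using $(L_0\cdot E_0)=0$ from condition \eqref{basic_dfn4}.

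For the inductive step, assume the proposition for length $i-1$ and all smaller. Given an $a$-pseudo-fundamental multiplet of length $i$, the morphism $\pi_i\colon M_{i-1}\to M_i$ is the elimination of $\Delta_i$, so by Proposition \ref{Nelprop}\eqref{Nelprop1} the divisor $-K_{M_{i-1}}$ is $\pi_i$-nef, and $K_{M_{i-1}/M_i}=-\sum (\text{stuff})$ is supported on the chains $\Gamma_{P,\bullet}$. First I would compute $L_{i-1}=\pi_i^*L_i-iK_{M_{i-1}/M_i}$: this is exactly the definition $L_{i-1}=L_i^{\Delta_i,i}$, matched against $E_{i-1}=E_i^{\Delta_i,a-i}$, and one checks $L_{i-1}\sim -aK_{M_{i-1}}-E_{i-1}$ using $K_{M_{i-1}}=\pi_i^*K_{M_i}+K_{M_{i-1}/M_i}$ and the relation $-a\cdot(a-i)+\text{(coefficient bookkeeping)}$; the point is that $-a(K_{M_{i-1}/M_i})=(a-i)(\text{correction from }E)+i(\text{correction from }L)$, which is a formal identity in the exceptional divisors. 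For part \eqref{fundprop3}, the four identities at level $i$ follow from those at level $i-1$ by the projection formula: e.g. $(L_{i-1}\cdot E_{i-1})=((\pi_i^*L_i-iK_{M_{i-1}/M_i})\cdot(\pi_i^*E_i-(a-i)K_{M_{i-1}/M_i}))$, which expands to $(L_i\cdot E_i)+i(a-i)(K_{M_{i-1}/M_i}^2)$ after the cross terms vanish by the projection formula, and $(K_{M_{i-1}/M_i}^2)=-\deg\Delta_i$ by the structure of the chains. The remaining three identities are analogous, using $K_{M_{i-1}}=\pi_i^*K_{M_i}+K_{M_{i-1}/M_i}$ and $\sum_{j}j\deg(\Delta_j\cap C^j)$ splitting off the $j=i$ term via Examples \ref{E1} and \ref{E2}.

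For part \eqref{fundprop1}, suppose $(i+1)K_{M_i}+L_i$ is nef. I first need $iK_{M_i}+L_i$ nef and big: nefness is condition \eqref{funddfn2}, and bigness follows because $((i+1)K_{M_i}+L_i)$ nef plus $L_i$ nef and big (part \eqref{fundprop3}) gives $(iK_{M_i}+L_i\cdot (iK_{M_i}+L_i))=((i+1)K_{M_i}+L_i\cdot(iK_{M_i}+L_i))-(K_{M_i}\cdot(iK_{M_i}+L_i))$; bounding the pieces and using that on a rational surface $-K_{M_i}\cdot(\text{nef big})>0$ for the right reasons. Then $i\leq a-2$: if $i=a-1$ one gets a contradiction with $E_i$ nonzero effective and the volume inequality, since $(a-1)K_{M_i}+L_i$ nef plus $L_i\sim -aK_{M_i}-E_i$ forces $-K_{M_i}-E_i$ nef, impossible unless... (this needs the bigness and an intersection with $E_i$'s components). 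To construct $\pi_{i+1}$: $iK_{M_i}+L_i$ is nef and big, and by adjunction/Riemann--Roch on the rational surface $M_i$, if it is not ample there is a $(-1)$-curve or $(-2)$-curve it meets in degree zero; the condition \eqref{funddfn2} that $((i+1)K_{M_i}+L_i\cdot\gamma)\geq 0$ for every $(-1)$-curve $\gamma$, combined with $(iK_{M_i}+L_i\cdot\gamma)=0$ forcing $(K_{M_i}\cdot\gamma)\geq 0$, rules out $(-1)$-curves meeting it in degree $0$ unless $(K_{M_i}\cdot\gamma)=0$ which is absurd for a $(-1)$-curve; so the only curves contracted by $|m(iK_{M_i}+L_i)|$ for $m\gg 0$ are $(-2)$-curves, and $-K_{M_i}$ is nef on the contracted locus, i.e. the contraction is an elimination of some $\Delta_{i+1}$ by Proposition \ref{Nelprop}\eqref{Nelprop2}. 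Then one sets $E_{i+1}:=(\pi_{i+1})_*E_i$, $L_{i+1}:=(\pi_{i+1})_*L_i$ and verifies $E_i=E_{i+1}^{\Delta_{i+1},a-i-1}$, $L_i=L_{i+1}^{\Delta_{i+1},i+1}$ (again the formal exceptional-divisor identity, now run in reverse) and the axioms \eqref{funddfn1}--\eqref{funddfn4} for the length $i+1$ multiplet.

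For part \eqref{fundprop2}, suppose $(i+1)K_{M_i}+L_i$ is not nef; then there is an irreducible curve $\gamma$ with $((i+1)K_{M_i}+L_i\cdot\gamma)<0$. Since $iK_{M_i}+L_i$ is nef, $(K_{M_i}\cdot\gamma)<0$, so $\gamma$ spans a $K_{M_i}$-negative extremal ray; by condition \eqref{funddfn2} it cannot be a $(-1)$-curve (as $((i+1)K_{M_i}+L_i\cdot\gamma)\geq 0$ there), so the contraction of that ray is a fibration or contracts $M_i$ to a point — whence $M_i\cong\pr^2$ or a Hirzebruch surface and $\gamma$ is a line or a fiber, with $((i+1)K_{M_i}+L_i\cdot l)<0$ on that $l$. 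I expect the main obstacle to be part \eqref{fundprop1}: precisely checking that the birational contraction defined by $iK_{M_i}+L_i$ contracts only $(-2)$-curves (so that $-K$ stays nef over the base and Proposition \ref{Nelprop}\eqref{Nelprop2} applies), and that the pushforwards $E_{i+1},L_{i+1}$ satisfy all the multiplet axioms with the correct multiplicities. This is where the delicate use of the $(-1)$-curve condition \eqref{funddfn2} and the bookkeeping with $E^{\Delta,s}$ from Examples \ref{E1} and \ref{E2} all have to come together; the numerical identities in part \eqref{fundprop3} and the dichotomy in part \eqref{fundprop2} are comparatively mechanical once the framework of Proposition \ref{Nelprop} is in place.
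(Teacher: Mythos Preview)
Your inductive framework and the computations for part \eqref{fundprop3} are essentially correct and match the paper's approach. Part \eqref{fundprop2} is also fine. The genuine gap is in your construction of $\pi_{i+1}$ in part \eqref{fundprop1}.

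You propose to define $\pi_{i+1}$ as the morphism given by $|m(iK_{M_i}+L_i)|$ for $m\gg 0$, and you correctly observe that under the hypothesis ``$(i+1)K_{M_i}+L_i$ nef'' no $(-1)$-curve can be $(iK_{M_i}+L_i)$-trivial. But this observation works \emph{against} you: an elimination in the sense of Definition \ref{elimdfn} always contracts a $(-1)$-curve (the terminal curve $\Gamma_{P,k}$ of each chain in Proposition \ref{Nelprop}\eqref{Nelprop1}), so a morphism contracting only $(-2)$-curves cannot be an elimination. Concretely, your morphism lands on a surface with Du Val singularities, so Proposition \ref{Nelprop}\eqref{Nelprop2} --- which requires both source and target to be nonsingular --- does not apply. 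The $(-1)$-curve $\Gamma_{P,k}$ that \emph{must} be contracted by $\pi_{i+1}$ satisfies $(iK_{M_i}+L_i\cdot\Gamma_{P,k})=1$, not $0$, so it is invisible to your linear system.

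The paper's construction is different: one runs a $\bigl((i+2)K_{M_i}+L_i\bigr)$-minimal model program. Each step contracts a $(-1)$-curve $\gamma$ with $((i+2)K_{M_i}+L_i\cdot\gamma)<0$; since $(i+1)K_{M_i}+L_i$ is nef and $(K_{M_i}\cdot\gamma)=-1$, one gets $((i+1)K_{M_i}+L_i\cdot\gamma)=0$ at every step, so $(i+1)K_{M_i}+L_i$ descends to the target. This gives $-K_{M_i}$ relatively nef over $M_{i+1}$, and now Proposition \ref{Nelprop}\eqref{Nelprop2} applies to produce $\Delta_{i+1}$. The termination condition of this MMP is exactly condition ($\sF$\ref{funddfn2}) for the length-$(i+1)$ multiplet, and the identity $(i+1)K_{M_i}+L_i=\pi_{i+1}^*\bigl((i+1)K_{M_{i+1}}+L_{i+1}\bigr)$ immediately yields $L_i=L_{i+1}^{\Delta_{i+1},i+1}$ and $E_i=E_{i+1}^{\Delta_{i+1},a-i-1}$. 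Your bigness argument for $iK_{M_i}+L_i$ and the bound $i\le a-2$ are also cleaner via the paper's identities $(i+1)(iK_{M_i}+L_i)=i\bigl((i+1)K_{M_i}+L_i\bigr)+L_i$ and $a\bigl((i+1)K_{M_i}+L_i\bigr)\sim -(i+1)E_i+(a-i-1)L_i$.
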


\begin{proof}
We can assume by induction on $i$ 
that $L_{i-1}$ is nef and big and $L_{i-1}=L_i^{\Delta_i, i}$ if $i\geq 1$. 
Thus $L_i$ is nef and big. 

\eqref{fundprop1}
Assume that $(i+1)K_{M_i}+L_i$ is nef. Since 
$a((i+1)K_{M_i}+L_i)\sim-(i+1)E_i+(a-(i+1))L_i$, we must have $a-(i+1)>0$. Moreover, 
since $(i+1)(iK_{M_i}+L_i)=i((i+1)K_{M_i}+L_i)+L_i$, $iK_{M_i}+L_i$ is nef and big. 
From now on, we run $((i+2)K_{M_i}+L_i)$-minimal model program and let 
$\pi_{i+1}\colon M_i\to M_{i+1}$ be the composition of the morphisms in the program. 
More precisely, we obtain the morphism $\pi_{i+1}$ which is the composition of 
monoidal transforms such that each exceptional $(-1)$-curve intersects 
the strict transform 
of $(i+2)K_{M_i}+L_i$ negatively. Furthermore, 
$((i+2)K_{M_{i+1}}+L_{i+1}\cdot \gamma)\geq 0$ holds for any $(-1)$-curve 
$\gamma\subset M_{i+1}$. Since $(i+1)K_{M_i}+L_i$ is nef, any $(-1)$-curve in each 
step of the monoidal transform intersects the strict transform 
of $(i+1)K_{M_i}+L_i$ trivially. Thus $(i+1)K_{M_i}+L_i=\pi_{i+1}^*((i+1)K_{M_{i+1}}+L_{i+1})$. 
In particular, $-K_{M_i}$ is $\pi_{i+1}$-nef. By Proposition \ref{Nelprop}, there exists 
a zero-dimensional subscheme $\Delta_{i+1}\subset M_{i+1}$ which satisfies the 
$(\nu1)$-condition such that $\pi_{i+1}$ is the elimination of $\Delta_{i+1}$. 
Furthermore, we have $L_i=L_{i+1}^{\Delta_{i+1}, i+1}$ and 
$E_i=E_{i+1}^{\Delta_{i+1}, a-i-1}$. 
Since each step of the $((i+2)K_{M_i}+L_i)$-minimal model program can be seen 
as a step of $(-E_i)$-minimal model program, $E_{i+1}$ is nonzero effective. 
Thus the assertion \eqref{fundprop1} follows. 

\eqref{fundprop2}
Follows from \cite[Theorem 2.1]{mori}. 

\eqref{fundprop3}
We know that $(-K_{M_i}\cdot L_i)=(-K_{M_{i-1}}\cdot L_{i-1})-i(K_{M_{i-1}/K_{M_i}}^2)
=(-K_{M_{i-1}}\cdot L_{i-1})+i\deg\Delta_i$. 
Thus we have $(1/a)(L_0^2)=(-K_{M_i}\cdot L_i)-\sum_{j=1}^ij\deg\Delta_j$ by 
induction on $i$. 
Other assertions follow similarly. 
\end{proof}

As a direct corollary of Proposition \ref{fundprop} \eqref{fundprop1}, 
we have the following.

\begin{corollary}\label{bfcor}
Fix $a\geq 2$. Let $(M, E_M)$ be an $a$-basic pair. Then there exists 
a positive integer $1\leq b\leq a-1$ 
and an $a$-fundamental multiplet $(M_b, E_b; \Delta_1,\dots,\Delta_b)$ of length $b$ 
such that the associated $a$-basic pair is isomorphic to $(M, E_M)$. 
\end{corollary}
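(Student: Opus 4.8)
The plan is to iterate Proposition \ref{fundprop} \eqref{fundprop1} starting from the $a$-basic pair $(M, E_M)$, viewed as an $a$-pseudo-fundamental multiplet of length $0$ by Definition \ref{fundrmk_dfn} \eqref{fundrmk_dfn2}, and to show the process terminates at some length $b$ with $1\leq b\leq a-1$ where the resulting multiplet is $a$-fundamental.

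First I would check the base of the induction. By Definition \ref{basdfn} \eqref{basdfn3}, we have that $K_{M_0}+L_0$ is nef and $(K_{M_0}+L_0\cdot L_0)>0$; in the language of Definition \ref{funddfn} with $i=0$, the condition \eqref{funddfn2} reads that $L_0\sim -aK_{M_0}-E_0$ satisfies $0\cdot K_{M_0}+L_0=L_0$ is nef (which holds since $L_0$ is nef and big, cf.\ Proposition \ref{dPbprop} and the fact that $L_0$ is the fundamental divisor) and $(K_{M_0}+L_0\cdot\gamma)\geq 0$ for every $(-1)$-curve $\gamma$. The latter holds because $K_{M_0}+L_0$ is nef. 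So $(M_0, E_0)$ is an $a$-pseudo-fundamental multiplet of length $0$; by convention we set $\Delta_1,\dots$ to be introduced inductively. If $K_{M_0}+L_0$ (i.e.\ $1\cdot K_{M_0}+L_0$) is not nef, then $(M_0, E_0)$ is already an $a$-fundamental multiplet of length $0$ — but Corollary \ref{bfcor} demands $b\geq 1$, so I should be slightly careful here. Since $K_{M_0}+L_0$ \emph{is} nef by \eqref{basdfn3}, this degenerate case does not occur, and we may always apply Proposition \ref{fundprop} \eqref{fundprop1} once with $i=0$ to produce $\pi_1\colon M_0\to M_1$, a subscheme $\Delta_1$, divisors $E_1,L_1$, and an $a$-pseudo-fundamental multiplet $(M_1, E_1;\Delta_1)$ of length $1$ whose associated $a$-basic pair is $(M_0,E_0)=(M,E_M)$.

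Next I would run the inductive step: given an $a$-pseudo-fundamental multiplet $(M_i, E_i;\Delta_1,\dots,\Delta_i)$ of length $i$ with $1\leq i\leq a-1$, either $(i+1)K_{M_i}+L_i$ is not nef — in which case the multiplet is by definition $a$-fundamental and we stop, taking $b=i$ — or it is nef, in which case Proposition \ref{fundprop} \eqref{fundprop1} applies to produce a multiplet of length $i+1$ (and in particular guarantees $i+1\leq a-1$, since that proposition asserts $i\leq a-2$ whenever $(i+1)K_{M_i}+L_i$ is nef). The associated $a$-basic pair is preserved at each step by construction (Definition \ref{funddfn} \eqref{funddfn4} and Definition \ref{fundrmk_dfn} \eqref{fundrmk_dfn1}), so if the process stops at length $b$ the associated $a$-basic pair of $(M_b, E_b;\Delta_1,\dots,\Delta_b)$ is isomorphic to $(M,E_M)$, as required.

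The only real obstacle is \textbf{termination}: I must rule out an infinite chain in which $(i+1)K_{M_i}+L_i$ is nef for every $i$. But this is immediate from the numerical constraint already built into Proposition \ref{fundprop} \eqref{fundprop1}: whenever $(i+1)K_{M_i}+L_i$ is nef we are forced to have $i\leq a-2$, equivalently $a-(i+1)>0$ (this came from $a((i+1)K_{M_i}+L_i)\sim -(i+1)E_i+(a-(i+1))L_i$ together with $E_i\neq 0$, $L_i$ nef and big). Hence the construction can be iterated at most until $i=a-1$: at that stage $(a K_{M_{a-1}}+L_{a-1})$ cannot be nef, so $(M_{a-1},E_{a-1};\Delta_1,\dots,\Delta_{a-1})$ is $a$-fundamental and the process terminates with some $b\leq a-1$. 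Combined with $b\geq 1$ from the base case, this gives $1\leq b\leq a-1$ and completes the proof. I would write this up as a short induction, citing Proposition \ref{fundprop} \eqref{fundprop1}, \eqref{fundprop2} for the structural claims and Definition \ref{basdfn} \eqref{basdfn3} for the base case, with the termination bound $b\leq a-1$ extracted verbatim from the inequality in Proposition \ref{fundprop} \eqref{fundprop1}.
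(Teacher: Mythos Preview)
Your proposal is correct and follows exactly the approach the paper intends: the paper states Corollary \ref{bfcor} simply as ``a direct corollary of Proposition \ref{fundprop} \eqref{fundprop1}'' without further elaboration, and your argument spells out precisely that iteration, including the observation that $K_{M_0}+L_0$ is nef by ($\sC$\ref{basdfn3}) so the first step always applies (giving $b\geq 1$) and the bound $i\leq a-2$ in Proposition \ref{fundprop} \eqref{fundprop1} forces termination with $b\leq a-1$.
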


From the next proposition, we can replace an $a$-fundamental multiplet with 
another one. The proof is same as that of \cite[Proposition 4.4]{N}. 
See also \cite[Proposition 3.14]{F} and \cite[Theorem 3.12]{FY}.

\begin{proposition}\label{hirzprop}
Fix $a\geq 2$ and $1\leq b< a$. Let $(M_b, E_b;\Delta_1,\dots,\Delta_b)$ be 
an $a$-fundamental multiplet of length $b$ and $L_b$ be the fundamental divisor.
Assume that $M_b=\F_n$ and $bK_{M_b}+L_b$ is not big and not trivial. Then 
there exists an $a$-fundamental multiplet 
$(M'_b, E'_b; \Delta_1,\dots,\Delta_{b-1}, \Delta'_b)$ such that 
the associated $a$-pseudo-fundamental multiplets of length $b-1$ are same, 
$M'_b=\F_{n'}$ for some $n'\geq 0$ and $\Delta'_b\cap\sigma'=\emptyset$, where 
$\sigma'\subset\F_{n'}$ is a section with $(\sigma'^2)=-n'$. 
\end{proposition}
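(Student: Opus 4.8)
The plan is to mimic the structure of \cite[Proposition 4.4]{N}, exploiting the flexibility in the elimination data over a Hirzebruch surface. Write $b'':=b-1$ and $a-b''=a-b+1$. We have the elimination $\pi_b\colon M_{b-1}\to M_b=\F_n$ of $\Delta_b$, with $E_{b-1}=E_b^{\Delta_b,a-b}$ and (by Proposition \ref{fundprop}) $L_{b-1}=L_b^{\Delta_b,b}$; the pair $(M_{b-1},E_{b-1};\Delta_1,\dots,\Delta_{b-1})$ is an $a$-pseudo-fundamental multiplet of length $b-1$, which is the datum we must keep fixed. By Proposition \ref{fundprop} \eqref{fundprop2}, $((b+1)K_{M_b}+L_b\cdot l)<0$ for a fiber $l$ of $\F_n\to\pr^1$, while $bK_{M_b}+L_b$ is nef by hypothesis; since it is assumed not big and not trivial, the divisor $bK_{M_b}+L_b$ is a nonzero nef non-big class on $\F_n$, hence (up to positive multiple) it is the class of a fiber $l$, i.e. $bK_{M_b}+L_b\sim \lambda l$ for some $\lambda\in\Z_{>0}$. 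This already pins down $L_b$ numerically: $L_b\sim -bK_{\F_n}+\lambda l$, and writing $-K_{\F_n}\sim 2\sigma+(n+2)l$ we get $L_b\sim 2b\sigma+(b(n+2)+\lambda)l$, where $\sigma$ is the minimal section.

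First I would analyze the location of $\Delta_b$ relative to $\sigma$. If $\Delta_b\cap\sigma=\emptyset$ already, there is nothing to do, so assume some point $P\in\Delta_b$ lies on $\sigma$. The point of the argument is that the map $\F_n\dashrightarrow\F_{n'}$ given by an elementary transformation centered at a point of the fiber $l_P$ through $P$ — blow up a point of $l_P$ and blow down the strict transform of $l_P$ — changes $n$ by $\pm1$ and can be arranged to move the base locus of the data off the minimal section. Concretely I would: (i) choose the point $Q$ on $M_b$ to be blown up so that after the elementary transformation the images of all points of $\Delta_b$ avoid the new minimal section $\sigma'$; this is possible because the condition "$\nu_P(\Delta_b)=1$" means $\Delta_b$ imposes only a simple (curvilinear, length-one in each tangent direction locally) condition along each fiber, and there is at least one fiber direction available to absorb it — this is exactly the content of \cite[Lemma 2.12]{N} type arguments used in Example \ref{E2}; (ii) transport $\Delta_b$ through the elementary transformation to a subscheme $\Delta'_b\subset M'_b=\F_{n'}$ still satisfying the $(\nu1)$-condition and with the same degree and the same local multiplicities, so that the elimination $\pi'_b\colon M_{b-1}\to M'_b$ of $\Delta'_b$ has the same source $M_{b-1}$ and recovers the same $(M_{b-1},E_{b-1};\Delta_1,\dots,\Delta_{b-1})$; (iii) set $E'_b:=(\pi'_b)_*E_{b-1}$ applied correctly, or rather use that $E_{b-1}=(E'_b)^{\Delta'_b,a-b}$ forces $E'_b$ to be the pushforward, and check $E'_b$ is nonzero effective (this follows because the elementary transformation is an isomorphism in codimension one and $E_b$ was nonzero). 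Then verify conditions \eqref{funddfn1}–\eqref{funddfn4} for $(M'_b,E'_b;\Delta_1,\dots,\Delta_{b-1},\Delta'_b)$: conditions \eqref{funddfn1}, \eqref{funddfn3}, \eqref{funddfn4} are immediate from the construction, and \eqref{funddfn2} together with the "$a$-fundamental" clause ($(b+1)K_{M'_b}+L'_b$ not nef) follow from computing $L'_b=-aK_{M'_b}-E'_b$ and using that the numerical type $L_b\sim -bK+\lambda l$ is preserved by the elementary transformation up to replacing $n$ by $n'$ — again $bK_{M'_b}+L'_b\sim\lambda l'$, a nonzero nef non-big class, so $(b+1)K_{M'_b}+L'_b\sim bK_{M'_b}+L'_b+K_{M'_b}$ is negative on a fiber.

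The main obstacle I expect is step (i)–(ii): making the elementary transformation "genuinely decrease the obstruction" and showing it terminates with $\Delta'_b\cap\sigma'=\emptyset$. The subtlety is that a single elementary transformation might move one point of $\Delta_b$ off the minimal section but could in principle push another onto it, or could increase $n$ without progress. The clean way around this is to pick the fiber to be one through a point $P\in\Delta_b\cap\sigma$ and to perform the transformation centered at the point of $l_P$ \emph{other} than $P\cap\sigma$ that the scheme structure of $\Delta_b$ does not see (here the $(\nu1)$-condition is essential: $\Delta_b$ cannot be "fat" along $l_P$, so there is room); one then shows each such move strictly decreases the integer $\sum_{P\in\Delta_b\cap\sigma}\mult_P\Delta_b$ or, failing that, strictly decreases $n$, so the process stops. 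This is precisely the inductive bookkeeping carried out in the proof of \cite[Proposition 4.4]{N}, and I would follow it verbatim, adapting the coefficients from the "$-aK$" normalization of \cite{N} to the "$-aK-E$" normalization here, which only affects the arithmetic in condition \eqref{funddfn2} and not the geometry of the elementary transformation. Finally I would record that by construction the associated $a$-pseudo-fundamental multiplets of length $b-1$ of the two multiplets literally coincide, as required.
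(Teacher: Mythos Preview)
Your plan coincides with the paper's own treatment, which simply declares the proof identical to \cite[Proposition 4.4]{N} (with cross-references to \cite[Proposition 3.14]{F} and \cite[Theorem 3.12]{FY}) and gives no further argument. One correction to your sketch: the elementary transformation should be centered at $P\in\Delta_b\cap\sigma$ itself---the first blow-up over $P$ is already part of $\pi_b$, so contracting the strict transform of $l_P$ produces a factorization $M_{b-1}\to\F_{n+1}$ with the \emph{same} source $M_{b-1}$; blowing up an auxiliary point $Q\notin|\Delta_b|$ as you describe would change $M_{b-1}$ and hence the length-$(b-1)$ multiplet you are required to preserve.
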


The next proposition is useful to know when a given multiplet is an 
$a$-pseudo-fundamental multiplet.

\begin{proposition}\label{convprop}
Fix $1\leq i<a$. Let $X$ be a nonsingular projective surface, $E$ be a divisor on $X$, 
$L\sim-aK_X-E$ be a divisor such that $iK_X+L$ is nef, $\Delta\subset X$ be a 
zero-dimensional subscheme which satisfies the $(\nu1)$-condition, 
$\psi\colon Y\to X$ be the elimination of $\Delta$, $L_Y:=L^{\Delta, i}$ and 
$E_Y:=E^{\Delta, a-i}$. If $E_Y$ is effective and $(L_Y\cdot C)\geq 0$ 
for any irreducible 
component $C\leq E_Y$, then $jK_Y+L_Y$ is nef for any $0\leq j\leq i$. 
\end{proposition}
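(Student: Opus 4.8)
The plan is to prove this by descending induction, reconstructing the inner surfaces $M_{i-1}, M_{i-2}, \dots, M_0$ via the eliminations $\pi_j$ and tracking the divisors $L_j := L^j$ (strict transform / iterated elimination pullback of $L_Y$) down the tower. More precisely, first I would observe that since $\Delta = \Delta_i$ satisfies the $(\nu1)$-condition, $\psi$ is a composition of blow-ups at points (with the minimal resolution of the blow-up of the scheme in between), so $L_Y = \psi^*L - iK_{Y/X}$ is $\psi$-nef by the structure in Proposition~\ref{Nelprop}~\eqref{Nelprop1} (each fiber is the chain displayed there, and $-K_Y$ is $\psi$-nef, hence $L_Y = \psi^*L - iK_{Y/X}$ restricted to any exceptional $\Gamma_{P,j}$ is $\geq 0$). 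Combined with the hypothesis $(L_Y\cdot C)\geq 0$ on components of $E_Y$, and using $iK_Y + L_Y = \psi^*(iK_X + L)$ (which holds because $K_{Y/X}$ is $\psi$-exceptional and $L_Y = \psi^*L - iK_{Y/X}$, $K_Y = \psi^*K_X + K_{Y/X}$), the divisor $iK_Y + L_Y$ is the pullback of a nef divisor and hence nef on $Y$.

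Next, the key point is to show $jK_Y + L_Y$ is nef for \emph{all} $0\leq j\leq i$, not just $j=i$. The natural approach: it suffices to check nonnegativity against every irreducible curve $C\subset Y$. Since $iK_Y + L_Y = \psi^*(iK_X+L)$ is nef and $L_Y$ is $\psi$-nef and (by the earlier inductive content, or directly) nef and big — indeed $aL_Y \sim a\psi^*L - aiK_{Y/X}$ and one checks $L_Y$ is nef using $L = -aK_X - E$ together with the hypotheses exactly as in the proof of Proposition~\ref{fundprop}~\eqref{fundprop3} — we can write, for $0 \le j \le i$,
\[
jK_Y + L_Y \;=\; \frac{j}{i}\bigl(iK_Y + L_Y\bigr) + \frac{i-j}{i}L_Y,
\]
a nonnegative combination of two nef divisors, hence nef. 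The only thing that needs genuine care is verifying that $L_Y$ itself is nef: against $\psi$-exceptional curves this is the chain computation of Proposition~\ref{Nelprop}, against strict transforms of curves in $E_X$ it follows from the analogues of Examples~\ref{E1} and~\ref{E2} together with the hypothesis $(L_Y\cdot C)\geq 0$, and against all other curves $C^Y$ one uses $(L_Y\cdot C^Y) = (\psi^*L \cdot C^Y) - i(K_{Y/X}\cdot C^Y) \geq (L\cdot \psi_*C^Y) \geq 0$ since $K_{Y/X}$ is effective-up-to-sign in the right direction — more precisely $K_{Y/X}$ is anti-effective here (discrepancies of the elimination are $\le 0$ by Proposition~\ref{Nelprop}), so $-i(K_{Y/X}\cdot C^Y)\ge 0$ for $C^Y$ not contained in the exceptional locus, and $(L\cdot \psi_*C^Y)\ge 0$ because $L = -aK_X - E$ with $iK_X+L$ nef forces $L$ nef on curves not in $|E|$ (write $aL = i(iK_X+L)\cdot\text{const} + \dots$ — the clean way is $L \sim -aK_X - E$ and $iK_X + L$ nef gives $(a-i)L \sim -aK_X - E - i K_X \cdot(\text{rearrange})$; I would instead simply note $(L\cdot\psi_*C^Y)=(L_Y\cdot C^Y)+i(K_{Y/X}\cdot C^Y)$ and run the inequality the other direction once nefness of $iK_Y+L_Y$ is in hand).

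The main obstacle I anticipate is the bookkeeping for $L_Y$ being nef against curves that are neither $\psi$-exceptional nor components of $E_Y$: a priori the hypothesis only controls $L_Y$ on components of $E_Y$ and (via Proposition~\ref{Nelprop}) on the exceptional chains. Resolving this requires using that $iK_Y + L_Y = \psi^*(iK_X+L)$ is nef globally to pin down $L_Y = (iK_Y+L_Y) - iK_Y$ and then leveraging that $K_Y$ is "almost anti-nef" relative to the structure — equivalently, one shows directly that $L$ is nef on $X$ (which is where the input $iK_X+L$ nef, $\coeff E \subset\{1,\dots,a-1\}$-type conditions enter, cf.\ Lemma~\ref{2alem}) and then that $L_Y = \psi^*L - iK_{Y/X}$ with $K_{Y/X}\le 0$ is nef because $-K_{Y/X}$ is $\psi$-nef and adding a $\psi$-nef exceptional divisor to the pullback of a nef divisor keeps it nef on exceptional curves, while on non-exceptional curves $(L_Y\cdot C^Y)\ge(L\cdot\psi_*C^Y)\ge 0$. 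Once $L_Y$ is nef the convex-combination argument closes the proof immediately, so I would front-load all the effort into that nefness-of-$L_Y$ lemma, likely by mirroring the computation already recorded in the proof of Proposition~\ref{fundprop}~\eqref{fundprop3}.
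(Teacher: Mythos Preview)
Your convex-combination reduction
\[
jK_Y+L_Y=\tfrac{j}{i}(iK_Y+L_Y)+\tfrac{i-j}{i}L_Y
\]
is fine and is exactly the paper's second identity $i(jK_Y+L_Y)=j(iK_Y+L_Y)+(i-j)L_Y$. The problem is your argument for the nefness of $L_Y$ itself on curves $B\subset Y$ that are neither $\psi$-exceptional nor components of $E_Y$. You assert that $K_{Y/X}$ is anti-effective (``discrepancies of the elimination are $\le 0$''), but $X$ is smooth, so every discrepancy is $\ge 1$ and $K_{Y/X}$ is \emph{effective}. Hence for a non-exceptional curve $B$ one has $(K_{Y/X}\cdot B)\ge 0$, and your inequality $(L_Y\cdot B)\ge(L\cdot\psi_*B)$ goes the wrong way: in fact $(L_Y\cdot B)=(L\cdot\psi_*B)-i(K_{Y/X}\cdot B)\le(L\cdot\psi_*B)$. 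Your fallback (``show $L$ is nef on $X$ using $\coeff E\subset\{1,\dots,a-1\}$-type conditions'') invokes a hypothesis that is not present in this proposition; only the effectivity of $E_Y$ and $(L_Y\cdot C)\ge 0$ on its components are assumed.

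What you are missing is the linear relation that the paper uses (with general $j$), which in the case $j=0$ reads
\[
(a-i)L_Y\ \sim\ a(iK_Y+L_Y)+iE_Y,
\]
an immediate consequence of $L_Y+E_Y\sim -aK_Y$. Intersecting with any curve $B$ not contained in $|E_Y|$ gives $(a-i)(L_Y\cdot B)=a(iK_Y+L_Y\cdot B)+i(E_Y\cdot B)\ge 0$, since $iK_Y+L_Y=\psi^*(iK_X+L)$ is nef and $E_Y$ is effective. Combined with your (correct) treatment of exceptional curves and of components of $E_Y$, this yields $L_Y$ nef and then your convex combination finishes. The paper runs the same identity for arbitrary $j$ directly: if $(jK_Y+L_Y\cdot B)<0$ then $(a-i)(jK_Y+L_Y\cdot B)=(a-j)(iK_Y+L_Y\cdot B)+(i-j)(E_Y\cdot B)$ forces $(E_Y\cdot B)<0$, so $B\le E_Y$, whence $(L_Y\cdot B)\ge 0$ contradicts $i(jK_Y+L_Y\cdot B)=j(iK_Y+L_Y\cdot B)+(i-j)(L_Y\cdot B)\ge 0$. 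So your outline is salvageable, but as written the crucial step rests on a sign error and an unavailable hypothesis; the missing ingredient is precisely the identity above involving $E_Y$.
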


\begin{proof}
Assume that there exists an integer $0\leq j\leq i$ such that $jK_Y+L_Y$ is non-nef. 
Since $iK_Y+L_Y=\psi^*(iK_X+L)$, we have $j<i$. 
Pick an irreducible curve $B\subset Y$ such that $(jK_Y+L_Y\cdot B)<0$. Then 
\begin{eqnarray*}
0 & > & (a-i)(jK_Y+L_Y\cdot B)\\
 & = & (a-j)(iK_Y+L_Y\cdot B)+(i-j)(E_Y\cdot B)\geq(i-j)(E_Y\cdot B). 
\end{eqnarray*}
Thus $B\leq E_Y$. Hence $(L_Y\cdot B)\geq 0$. However, 
\[
0>i(jK_Y+L_Y\cdot B)=j(iK_Y+L_Y\cdot B)+(i-j)(L_Y\cdot B)\geq 0,
\]
which leads to a contradiction. 
\end{proof}

\subsection{Local properties}

In this section, we fix the following notation: 
Let $1\leq i<a$, $(M_i, E_i; \Delta_1,\dots,\Delta_i)$ be an $a$-pseudo-fundamental multiplet of length $i$, $L_i$ be its fundamental divisor, 
$\pi_j\colon M_{j-1}\to M_j$ be the elimination of $\Delta_j$, 
$E_{j-1}:=E_j^{\Delta_j, a-j}$ and $L_{j-1}:=L_j^{\Delta_j, j}$ for $1\leq j\leq i$. 
Moreover, we fix a point $P\in\Delta_i$.

\begin{lemma}\label{FS1}
\begin{enumerate}
\renewcommand{\theenumi}{\arabic{enumi}}
\renewcommand{\labelenumi}{(\theenumi)}
\item\label{FS11}
The multiplicity of $E_i$ at $P$ is bigger than or equal to $a-i$. 
\item\label{FS12}
Pick any $\pi_i$-exceptional curve $\Gamma\subset M_{i-1}$. 
If $(\Gamma^2)=-1$, then $(L_{i-1}\cdot\Gamma)=i$. 
If $(\Gamma^2)=-2$, then $(L_{i-1}\cdot\Gamma)=0$. 
\item\label{FS13}
Assume that $E_i=eC$ around $P$, where $P\in C$ and $C$ is nonsingular and 
$e\leq a-i$. Then $e=a-i$, $\Delta_i\subset C$ around $P$ and $E_{i-1}$ is the 
strict transform of $E_i$ around over $P$. 
\item\label{FS14}
Assume that $E_i=(a-1)C$ around $P$, where $P\in C$ and $C$ is nonsingular. If 
$2i\leq a+1$, $\mult_P(\Delta_i\cap C)=1$ and $\mult_P\Delta_i\geq 2$, 
then $2i=a+1$ and $\mult_P\Delta_i=2$. 
\end{enumerate}
\end{lemma}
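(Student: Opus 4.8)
The plan is to analyze the divisor $E_{i-1} = E_i^{\Delta_i, a-i}$ locally over $P$ using the explicit formula of Example \ref{E1}, since near $P$ we have $E_i = eC$ with $e = a-1$ and $C$ nonsingular. Write $m := \deg\Delta_i$ and $k := \mult_P(\Delta_i \cap C)$, so the hypotheses give $k = 1$. By Example \ref{E1}, over $P$ we get
\[
E_{i-1} = (a-1)C^{M_{i-1}} + (a-1-(a-i))\Gamma_{P,1} + \sum_{j=2}^{m}\bigl((a-1)\cdot 1 - (a-i)j\bigr)\Gamma_{P,j},
\]
that is, the coefficient of $\Gamma_{P,1}$ is $i-1$ and the coefficient of $\Gamma_{P,j}$ for $2 \le j \le m$ is $(a-1) - (a-i)j$. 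First I would invoke condition ($\sF$\ref{funddfn1}) together with the fact (coming from the inductive structure, via ($\sF$\ref{funddfn4}) and ($\sF$\ref{funddfn1})) that $E_{i-1}$ is effective: every one of these coefficients must be $\ge 0$.

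Next I would extract the arithmetic consequence. Since $\mult_P\Delta_i \ge 2$, we have $m \ge 2$, so the coefficient of $\Gamma_{P,2}$ is defined and must satisfy $(a-1) - 2(a-i) \ge 0$, i.e. $2i \ge a+1$. Combined with the hypothesis $2i \le a+1$ this forces $2i = a+1$. Then the coefficient of $\Gamma_{P,2}$ is exactly $0$. If $m \ge 3$, the coefficient of $\Gamma_{P,3}$ would be $(a-1) - 3(a-i) = (a-1) - \tfrac{3}{2}(a-1) = -\tfrac{1}{2}(a-1) < 0$ (using $a-i = (a-1)/2$ and $a \ge 4 > 1$), contradicting effectivity of $E_{i-1}$. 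Hence $m = 2$. Since $\mult_P\Delta_i \le \deg\Delta_i = m = 2$ and $\mult_P\Delta_i \ge 2$ by hypothesis, we conclude $\mult_P\Delta_i = 2$, as desired.

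I expect the only real subtlety to be making precise why $E_{i-1}$ is effective in a neighbourhood of the fibre over $P$: this is where one uses that $(M_{i-1}, E_{i-1}; \Delta_1,\dots,\Delta_{i-1})$ is an $a$-pseudo-fundamental multiplet (when $i \ge 2$) or an $a$-basic pair (when $i = 1$), both of which require $E_{i-1}$ to be a nonzero \emph{effective} divisor by ($\sF$\ref{funddfn1}) or ($\sC$\ref{basdfn2}). Once that is in hand, the argument is the elementary coefficient bookkeeping above; there is no geometric obstacle, only the need to apply Example \ref{E1} correctly with the right value of $s = a-i$ and to note that the hypothesis $\mult_P(\Delta_i\cap C)=1$ is exactly the $k=1$ normalization used there (cf. \cite[Lemma 2.12]{N}).
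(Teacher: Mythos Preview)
Your argument for part \eqref{FS14} is essentially the paper's own: compute $\coeff_{\Gamma_{P,2}}E_{i-1}=(a-1)-2(a-i)$ via Example \ref{E1}, combine effectivity of $E_{i-1}$ with the hypothesis $2i\le a+1$ to force $2i=a+1$, and then rule out a third exceptional curve over $P$ by the same effectivity. The paper's proof is the same one-line coefficient check, just stated more tersely.

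One notational slip to fix: you set $m:=\deg\Delta_i$, but Example \ref{E1} is stated under the assumption $|\Delta|=\{P\}$, so when you apply it locally at $P$ the correct parameter is $m:=\mult_P\Delta_i$, the number of exceptional curves of $\pi_i$ lying over $P$. Your contradiction ``if $m\ge 3$ then $\coeff_{\Gamma_{P,3}}E_{i-1}<0$'' only makes sense with this local $m$; it says nothing about $\deg\Delta_i$ (there could well be other points in $|\Delta_i|$). With $m=\mult_P\Delta_i$ the last sentence becomes immediate---$m=2$ \emph{is} the conclusion---and the detour through $\mult_P\Delta_i\le\deg\Delta_i$ is unnecessary (and, as written, circular).

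Finally, note that your proposal addresses only part \eqref{FS14}. Parts \eqref{FS11}--\eqref{FS13} are proved in the paper by the same kind of coefficient reading from Example \ref{E1} (and, for \eqref{FS12}, from the identity $(iK_{M_{i-1}}+L_{i-1}\cdot\Gamma)=0$); you should supply those as well.
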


\begin{proof}
\eqref{FS11} 
The value $\coeff_{\Gamma_{P, 1}}E_{i-1}$ must be nonnegative. 

\eqref{FS12} 
Follows from the fact $(iK_{M_{i-1}}+L_{i-1}\cdot\Gamma)=0$. 

\eqref{FS13}
Set $m:=\mult_P\Delta_i$ and $k:=\mult_P(\Delta_i\cap C)$. 
By Example \ref{E1}, $\coeff_{\Gamma_{P,1}}E_{i-1}=e-(a-i)$. Thus $e=a-i$. If $k<m$, 
then $\coeff_{\Gamma_{P,m}}E_{i-1}=(a-i)(k-m)<0$ by Example \ref{E1}, 
a contradiction. 

\eqref{FS14}
We know that $\coeff_{\Gamma_{P, 2}}E_{i-1}=a-1-2(a-i)\leq 0$ by Example \ref{E1}. 
Thus $2i=a+1$ and $\mult_P\Delta_i=2$. 
\end{proof}

\begin{lemma}\label{FS2}
Assume that $a\geq 4$, $i=1$, $1\leq e\leq 2$, $E_1=(a-1)C_1+eC_2$ around $P$, 
where $C_1$, $C_2$ are nonsingular and intersect transversally at $P$. 
Then $e=2$ and $\mult_P\Delta_1=\mult_P(\Delta_1\cap C_2)$. Moreover, 
$(a, \mult_P\Delta_1)=(5,2)$ or $(4,3)$. 
\end{lemma}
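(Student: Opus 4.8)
The plan is to apply Example \ref{E2} directly, with the roles $C_1\mapsto C_1$, $C_2\mapsto C_2$, $e_1=a-1$, $e_2=e$, and $s=a-i=a-1$. First I would invoke \cite[Lemma 2.12]{N} to arrange $k_1:=\mult_P(\Delta_1\cap C_1)=1$, so that the formula of Example \ref{E2} applies with $m:=\deg\Delta_1=\mult_P\Delta_1$ (here $|\Delta_1|=\{P\}$ since we are working locally around $P$ and may assume $\Delta_1$ is supported at $P$) and $k_2:=\mult_P(\Delta_1\cap C_2)$. Then
\[
E_0=E_1^{\Delta_1,a-1}=(a-1)C_1^{M_0}+eC_2^{M_0}
+\sum_{i=1}^{k_2}\bigl(i(e-(a-1))+(a-1)\bigr)\Gamma_{P,i}
+\sum_{i=k_2+1}^m\bigl((a-1)+k_2e-i(a-1)\bigr)\Gamma_{P,i}.
\]

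Since $(M_0,E_0)$ is an $a$-basic pair, all coefficients of $E_0$ must be nonnegative, and moreover $\coeff E_0\subset\{1,\dots,a-1\}$, so each coefficient is at most $a-1$. I would extract the needed inequalities from the two families of coefficients. From the first family, the coefficient of $\Gamma_{P,k_2}$ is $k_2(e-(a-1))+(a-1)=k_2(e-a+1)+(a-1)$; nonnegativity forces $k_2(a-1-e)\le a-1$. From the second family (which is nonempty iff $k_2<m$), the coefficient of $\Gamma_{P,k_2+1}$ is $(a-1)+k_2e-(k_2+1)(a-1)=k_2e-k_2(a-1)=k_2(e-a+1)<0$ whenever $e\le a-2$, i.e. whenever $a\ge 4$ and $e\le 2$ (note $e-a+1\le 2-a+1=3-a<0$). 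Hence we cannot have $k_2<m$, so $k_2=m$, i.e. $\mult_P\Delta_1=\mult_P(\Delta_1\cap C_2)$; combined with Lemma \ref{FS1}\eqref{FS11} giving $m=\mult_P\Delta_1\ge a-1$, we also get $k_2=m\ge a-1\ge 3$.

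Now with $k_2=m\ge a-1$ the inequality $k_2(a-1-e)\le a-1$ reads $m(a-1-e)\le a-1\le m$, so $a-1-e\le 1$, i.e. $e\ge a-2$. Together with $e\le 2$ this forces $a-2\le 2$, so $a\le 4$, hence $a=4$ (since $a\ge 4$) and then $e\ge a-2=2$, so $e=2$. Wait — this only yields $a=4$, but the statement also allows $(a,\mult_P\Delta_1)=(5,2)$, so I must have been too hasty; the point is that Lemma \ref{FS1}\eqref{FS11} gives $m\ge a-1$ but equality need not be the only constraint, and when $a=5$ one could have $e=2$ with $k_2(a-1-e)=k_2\cdot 2\le 4$ forcing $k_2\le 2$, hence $m=k_2=2$, which contradicts $m\ge a-1=4$ — so in fact $a=5$ is impossible from $E_0$ alone. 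Let me reconsider: the resolution is that for $a=5$, $e=2$ we get $a-1-e=2$ and $m(a-1-e)=2m\le a-1=4$ gives $m\le 2$, matching $(5,2)$ only if $m\ge a-1$ is \emph{not} required here — but it is. So the honest conclusion is that the bound $m\ge a-1$ should be used more carefully: perhaps in the $(a,m)=(5,2)$ case $E_1$ is not globally of the stated form, or Lemma \ref{FS1}\eqref{FS11} gives the multiplicity of the \emph{global} $E_i$. I would resolve this by checking which coefficient bounds are tight and, for the remaining candidates $(a,e,m)\in\{(4,2,3),(5,2,2)\}$, verifying directly from the coefficient list of $E_0$ and the constraint $\coeff E_0\subset\{1,\dots,a-1\}$ that both genuinely occur (they match the types $\langle\mathrm{B}\rangle_4$ or $\langle\mathrm{C}\rangle_4$, and $\langle\mathrm{A}\rangle_5$ respectively), and that no other $(a,m)$ survives. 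The main obstacle I expect is precisely this bookkeeping: correctly combining the lower bound $\mult_P\Delta_1\ge a-1$ from Lemma \ref{FS1}\eqref{FS11} with the upper bounds on the $\Gamma_{P,i}$-coefficients forced by $\coeff E_0\subset\{1,\dots,a-1\}$, and being careful that these are local statements around $P$ — so one should compare with the (global) $a$-basic pair only through nonnegativity of coefficients, and invoke Corollary \ref{indacor} (using that $\coeff_{C_1}E_0=a-1$ is coprime to $a$ when... — actually this needs $\gcd(a-1,a)=1$ which always holds, confirming the associated surface has index exactly $a$) to keep the index under control.
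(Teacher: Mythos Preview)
Your proposal contains a genuine error and misses the key idea. You invoke Lemma \ref{FS1}\eqref{FS11} to conclude $m=\mult_P\Delta_1\ge a-1$, but that lemma bounds the multiplicity of the \emph{divisor} $E_1$ at $P$, which here equals $(a-1)+e$ (since $C_1,C_2$ meet transversally), not $\mult_P\Delta_1$. This is why you cannot recover the case $(a,m)=(5,2)$ and your argument collapses into confusion.

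The step you are missing is that $\Gamma_{P,m}$ is a $(-1)$-curve, so by Lemma \ref{2alem} it cannot appear in $E_0$; hence $\coeff_{\Gamma_{P,m}}E_0=0$ \emph{exactly}, not merely $\ge 0$. Once $k_2=m$ is established, this equality reads $m(e+1-a)+(a-1)=0$, i.e.\ $m=(a-1)/(a-1-e)$. Integrality then forces $e=2$ (for $e=1$ one would need $a-2\mid a-1$, impossible for $a\ge 4$), and $a-3\mid a-1$ gives $a\in\{4,5\}$ with $m=3,2$ respectively. There is also a smaller gap: you cannot simply ``arrange'' $k_1=1$ via \cite[Lemma 2.12]{N}, because $C_1$ and $C_2$ carry different coefficients and are not interchangeable. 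The paper first assumes $k_1>1$ (whence $k_2=1$) and uses the same vanishing $\coeff_{\Gamma_{P,m}}E_0=0$ to reach a contradiction, only then concluding $k_1=1$.
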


\begin{proof}
Set $m:=\mult_P\Delta_1$ and $k_j:=\mult_P(\Delta_1\cap C_j)$. 
By Lemma \ref{2alem}, $\coeff_{\Gamma_{P,m}}E_0=0$. 
If $k_1>1$, then $k_2=1$ by Example \ref{E2}. However, if $m=k_1$, then 
$\coeff_{\Gamma_{P, m}}E_0=e$; if $m>k_1$, then 
$\coeff_{\Gamma_{P, k_1+1}}E_0=e-(a-1)<0$. This leads to a contradiction. 
Thus $k_1=1$. If $m>k_2$, then 
$\coeff_{\Gamma_{P, m}}E_0=a-1+ek_2-m(a-1)\leq 2k_2-k_2(a-1)<0$ 
by Example \ref{E2}. This leads to a contradiction. Thus $m=k_2$. 
Again by Example \ref{E2}, $\coeff_{\Gamma_{P, m}}E_0=m(e+1-a)+a-1$. Thus 
$m=(a-1)/(a-(e+1))$. Thus $e=2$. Moreover, $(a,m)=(5,2)$ or $(4,3)$. 
\end{proof}

\begin{lemma}\label{FS3}
Assume that $i\geq 2$, $\Delta_j=\emptyset$ for any $1\leq j<i$, 
$E_i=(a-i+1)C$ around $P$ and $\mult_P\Delta_i<2(a-i+1)$, where $P\in C$ and 
$C$ is nonsingular. Then $\mult_P\Delta_i=a-i+1$ and $\mult_P(\Delta_i\cap C)=a-i$. 
\end{lemma}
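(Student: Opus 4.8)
\textbf{Proof plan for Lemma \ref{FS3}.}
The strategy is to compute the coefficients of $E_{i-1}$ along the exceptional curves $\Gamma_{P,j}$ of $\pi_i$ using Example \ref{E1}, exactly as in the proofs of Lemma \ref{FS1}\eqref{FS13} and Lemma \ref{FS2}, and then extract arithmetic constraints from two sources: the nonnegativity of $\coeff E_{i-1}$ (which holds because $(M_{i-1},E_{i-1};\Delta_1,\dots,\Delta_{i-1})$ is an $a$-pseudo-fundamental multiplet, in fact an $a$-basic pair since all earlier $\Delta_j$ are empty) and the hypothesis $\mult_P\Delta_i<2(a-i+1)$. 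First I would set $e:=a-i+1$, $m:=\mult_P\Delta_i$, and $k:=\mult_P(\Delta_i\cap C)$, and invoke Example \ref{E1} with $s=a-i$ to write
\[
E_{i-1}=eC^{M_{i-1}}+\sum_{j=1}^{k}j(e-(a-i))\Gamma_{P,j}+\sum_{j=k+1}^{m}(ek-j(a-i))\Gamma_{P,j}.
\]
Since $e-(a-i)=1$, the first block of coefficients is simply $j$ for $1\le j\le k$, which is automatically nonnegative; the real information comes from the second block.

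The key step is the analysis of $\coeff_{\Gamma_{P,m}}E_{i-1}=ek-m(a-i)$ and, if needed, $\coeff_{\Gamma_{P,k+1}}E_{i-1}$. If $k<m$, then looking at $j=k+1$ gives $\coeff_{\Gamma_{P,k+1}}E_{i-1}=ek-(k+1)(a-i)=k(e-(a-i))-(a-i)=k-(a-i)$, so nonnegativity forces $k\ge a-i$; combined with $k\le m$ and the terminal condition $\coeff_{\Gamma_{P,m}}E_{i-1}=ek-m(a-i)\ge 0$, i.e.\ $m\le ek/(a-i)=k+k/(a-i)$. I would then feed in $m<2e=2(a-i)+2$ to pin down the possibilities: the inequalities $a-i\le k\le m$ and $m(a-i)\le ek=k(a-i)+k$ give $(m-k)(a-i)\le k$, hence $k\ge (m-k)(a-i)$; since $m>k$ we get $k\ge a-i$ at least, and pushing the bound $m<2(a-i+1)$ through shows $m-k=1$ and $k=a-i$, whence $m=a-i+1$. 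The remaining case $k=m$ must be excluded: there $\coeff_{\Gamma_{P,m}}E_{i-1}=em-m(a-i)=m>0$, but one also checks (as in Lemma \ref{FS2}) that this configuration is incompatible with $\Gamma_{P,m}$ being a $(-1)$-curve carrying the correct intersection number with $L_{i-1}$, or more directly that it contradicts $E_{i-1}$ being the $E$-divisor of an $a$-basic pair because then $L_{i-1}\cdot\Gamma_{P,m}$ would be forced to vanish while $\Gamma_{P,m}$ meets $C^{M_{i-1}}$; I would use Lemma \ref{FS1}\eqref{FS12} together with condition \eqref{basic_dfn4} to close this off. Putting the two cases together yields $m=a-i+1$ and $k=a-i$, which is the assertion.

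The main obstacle I anticipate is the $k=m$ case: one needs to rule it out cleanly, and the cleanest route is probably not a coefficient computation but the observation that $\Gamma_{P,m}$ is a $(-1)$-curve with $(L_{i-1}\cdot\Gamma_{P,m})=i>0$ by Lemma \ref{FS1}\eqref{FS12}, so $\Gamma_{P,m}\not\le E_{i-1}$, forcing $\coeff_{\Gamma_{P,m}}E_{i-1}=0$; but $k=m$ gives $\coeff_{\Gamma_{P,m}}E_{i-1}=m>0$, a contradiction. A secondary subtlety is making sure the inequality manipulations correctly exploit $m<2(a-i+1)$ rather than the weaker $m\le 2(a-i+1)$ — the strict inequality is exactly what forbids $m-k\ge 2$ and also (after the $k=m$ case is eliminated) what forces $m=a-i+1$ rather than any larger value $\le 2(a-i+1)-1$. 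Everything else is routine bookkeeping with Example \ref{E1}.
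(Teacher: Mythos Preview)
Your proposal contains a genuine gap in the $k<m$ branch. From the inequalities you wrote down --- $k\ge a-i$, $(m-k)(a-i)\le k$, and $m<2(a-i+1)$ --- you can indeed force $m-k\le 2$ and rule out $m-k=2$, but $m-k=1$ does \emph{not} by itself pin down $k=a-i$. For instance, with $a-i=3$ the choice $k=5$, $m=6$ satisfies all three inequalities, yet $(k,m)\ne(a-i,a-i+1)$. So ``pushing the bound $m<2(a-i+1)$ through'' does not finish the argument; nonnegativity of the coefficients is too weak here.

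The fix is the very observation you reserved for the $k=m$ case: because $\Delta_j=\emptyset$ for $j<i$ we have $(M_{i-1},E_{i-1})=(M_0,E_0)$, and by Lemma~\ref{2alem} (or equivalently by condition~($\sC$\ref{basic_dfn4}) together with Lemma~\ref{FS1}\eqref{FS12}) the $(-1)$-curve $\Gamma_{P,m}$ cannot appear in $E_0$, so $\coeff_{\Gamma_{P,m}}E_{i-1}=0$ \emph{exactly}. This single equality handles both branches at once: if $k=m$ the coefficient is $m>0$, a contradiction; if $k<m$ the coefficient is $(a-i+1)k-(a-i)m$, so $(a-i+1)k=(a-i)m$, and since $\gcd(a-i,a-i+1)=1$ we get $(a-i+1)\mid m$, hence $m=a-i+1$ by the hypothesis $m<2(a-i+1)$, and then $k=a-i$. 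This is exactly the paper's proof; the detour through inequalities is unnecessary and, as written, does not close.
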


\begin{proof}
Set $m:=\mult_P\Delta_i$ and $k:=\mult_P(\Delta_i\cap C)$. 
By Lemma \ref{2alem}, $E_{i-1}(=E_0)$ does not contain any $(-1)$-curve. 
Thus $0=\coeff_{\Gamma_{P, m}}E_{i-1}=(a-i+1)k-(a-i)m$ by Example \ref{E1}. 
Thus the assertion follows. 
\end{proof}

\section{Examples}\label{ex_section}

In this section, we see the $a$-fundamental multiplets and the log del Pezzo surfaces 
which appeared in Section \ref{intro_section}. 

\subsection{Special $a$-fundamental multiplets}\label{ex_section1}

\begin{example}\label{fEO}
Let $a\geq 2$ and $b:=\lfloor(a+1)/2\rfloor$. We consider the following 
$a$-fundamental multiplets $(M_b, E_b; \Delta_1,\dots,\Delta_b)$ of length $b$: 
\begin{description}
\item[{$\langle{\rm O}\rangle_a$}]
$M_b=\F_{2a}$, $E_b=(a-1)\sigma$, $\Delta_b=\emptyset,\dots,\Delta_1=\emptyset$. 
\item[{$\langle{\rm I}\rangle_a$}]
$M_b=\F_{2a-1}$, $E_b=(a-1)\sigma$, $\Delta_b=\emptyset,\dots,\Delta_2=\emptyset$, 
$\Delta_1\subset\sigma^1$ with $\deg\Delta_1=1$. 
\item[{$\langle{\rm II}\rangle_a$}]
$M_b=\F_{2a-2}$, $E_b=(a-1)\sigma$, $\Delta_b=\emptyset,\dots,\Delta_2=\emptyset$, 
$\Delta_1\subset\sigma^1$ with $\deg\Delta_1=2$. 
\item[{$\langle{\rm III}\rangle_a$}]
$M_b=\F_{2a-3}$, $E_b=(a-1)\sigma$, $\Delta_b=\emptyset,\dots,\Delta_2=\emptyset$, 
$\Delta_1\subset\sigma^1$ with $\deg\Delta_1=3$. 
\item[{$\langle{\rm IV}\rangle_a$}]
$M_b=\F_{2a-4}$, $E_b=(a-1)\sigma$, $\Delta_b=\emptyset,\dots,\Delta_2=\emptyset$, 
$\Delta_1\subset\sigma^1$ with $\deg\Delta_1=4$. 
\end{description}
By Proposition \ref{convprop}, it is easy to check that these multiplets are exactly 
$a$-fundamental multiplets. Moreover, all of the associated $a$-basic pairs $(M_0, E_0)$ 
satisfy that $E_0=(a-1)\sigma^0$ and $((\sigma^0)^2)=-2a$. Thus the associated 
log del Pezzo surfaces $S$ are of index $a$ by Corollary \ref{indacor}. 
Furthermore, the value $(-K_S^2)$ is equal to $(2a^2+4a+2)/a$ 
(if $\langle$O$\rangle_a$), $(2a^2+3a+2)/a$ (if $\langle$I$\rangle_a$), 
$(2a^2+2a+2)/a$ (if $\langle$II$\rangle_a$), $(2a^2+a+2)/a$ (if $\langle$III$\rangle_a$), 
$(2a^2+2)/a$ (if $\langle$IV$\rangle_a$) by Proposition \ref{fundprop} \eqref{fundprop3}. 
\end{example}

\begin{example}\label{fEA}
We consider the case $(a, b)=(5,3)$. 
Consider the following $5$-fundamental multiplet $(M_3, E_3; \Delta_1,\Delta_2, \Delta_3)$ of length three: 
\begin{description}
\item[{$\langle{\rm A}\rangle_5$}]
$M_3=\F_8$, $E_3=4\sigma+2l$, $\Delta_3\subset l\setminus\sigma$ with 
$\deg\Delta_3=2$, $\Delta_2=\emptyset$ and $\Delta_1=\emptyset$. 
\end{description}
By Proposition \ref{convprop}, it is easy to check that the multiplet is exactly 
a $5$-fundamental multiplet. Moreover, the associated $5$-basic pair $(M_0, E_0)$ 
satisfies that $E_0=4\sigma^0+2l^0$ and the dual graph of $E_0$ is the 
following: 
\begin{center}
    \begin{picture}(50, 40)(0, 50)
    \put(-6, 57){\textcircled{\tiny $8$}}
    \put(0, 73){\makebox(0, 0)[b]{$\sigma^0$}}
    \put(5, 60){\line(1, 0){30}}
    \put(34, 57){\textcircled{\tiny $2$}}
    \put(40, 73){\makebox(0, 0)[b]{$l^0$}}
    \end{picture}
\end{center}
Furthermore, the associated log del Pezzo surface $S$ of index five satisfies that 
the value $(-K_S^2)$ is equal to $54/5$ by Proposition \ref{fundprop} \eqref{fundprop3}. 
\end{example}

\begin{example}\label{fEB}
We consider the case $(a, b)=(4,2)$. 
Consider the following $4$-fundamental multiplets $(M_2, E_2; \Delta_1,\Delta_2)$ 
of length two: 
\begin{description}
\item[{$\langle{\rm B}\rangle_4$}]
$M_2=\F_4$, $E_2=3\sigma$, $|\Delta_2|=\{P\}$ with $P\in\sigma$ such that 
$\deg\Delta_2=3$, $\deg(\Delta_2\cap\sigma)=2$ and $\Delta_1=\emptyset$. 
\item[{$\langle{\rm C}\rangle_4$}]
$M_2=\F_5$, $E_2=3\sigma+2l$, $\deg\Delta_2=1$ with 
$\Delta_2\subset l\setminus\sigma$, 
$|\Delta_1|=\{P\}$ such that $P=\sigma^1\cap l^1$ and 
$\deg\Delta_1=\deg(\Delta_1\cap l^1)=3$. 
\end{description}
By Proposition \ref{convprop}, it is easy to check that these multiplets are exactly 
$4$-fundamental multiplets. Moreover, the associated $4$-basic pairs $(M_0, E_0)$ 
satisfy the following: 
\begin{description}
\item[{\rm The case} {$\langle{\rm B}\rangle_4$}]
$E_0=3\sigma^0+2\Gamma_{P,2}^0+\Gamma_{P,1}^0$ and the dual graph of $E_0$ is 
the following: 
\begin{center}
    \begin{picture}(70, 40)(0, 50)
    \put(-6, 57){\textcircled{\tiny $6$}}
    \put(0, 73){\makebox(0, 0)[b]{$\sigma^0$}}
    \put(5, 60){\line(1, 0){30}}
    \put(34, 57){\textcircled{\tiny $2$}}
    \put(40, 70){\makebox(0, 0)[b]{$\Gamma_{P,2}^0$}}
    \put(45, 60){\line(1, 0){30}}
    \put(74, 57){\textcircled{\tiny $2$}}
    \put(80, 70){\makebox(0, 0)[b]{$\Gamma_{P,1}^0$}}
    \end{picture}
\end{center}
\item[{\rm The case} {$\langle{\rm C}\rangle_4$}]
$E_0=3\sigma^0+2\Gamma_{P,1}+\Gamma_{P,2}+2l^0$ 
and the dual graph of $E_0$ is the following: 
\begin{center}
    \begin{picture}(100, 40)(0, 50)
    \put(-6, 57){\textcircled{\tiny $6$}}
    \put(0, 73){\makebox(0, 0)[b]{$\sigma^0$}}
    \put(5, 60){\line(1, 0){30}}
    \put(34, 57){\textcircled{\tiny $2$}}
    \put(40, 70){\makebox(0, 0)[b]{$\Gamma_{P,1}$}}
    \put(45, 60){\line(1, 0){30}}
    \put(74, 57){\textcircled{\tiny $2$}}
    \put(80, 70){\makebox(0, 0)[b]{$\Gamma_{P,2}$}}
    \put(100, 55){\makebox(0, 0)[b]{$\sqcup$}}
    \put(114, 57){\textcircled{\tiny $4$}}
    \put(120, 73){\makebox(0, 0)[b]{$l^0$}}
    \end{picture}
\end{center}
\end{description}
Thus the associated 
log del Pezzo surfaces $S$ are of index $4$ by Corollary \ref{indacor}. 
Furthermore, the value $(-K_S^2)$ is equal to $8$ in any case 
by Proposition \ref{fundprop} \eqref{fundprop3}. 
\end{example}

\begin{remark}\label{ind23rmk}
Let $S$ be a log del Pezzo surface of index $a$ such that $(-K_S^2)\geq 2a$. 
\begin{enumerate}
\renewcommand{\theenumi}{\arabic{enumi}}
\renewcommand{\labelenumi}{(\theenumi)}
\item\label{ind23rmk1}
Assume that $a=2$. Then $S$ is isomorphic to the log del Pezzo surface associated 
to the fundamental triplet in the sense of \cite{N} whose type is one of the 
following: 
\begin{itemize}
\item
$[4;1,0]_0$ \,\,\, $\left((-K_S^2)=9\right)$, 
\item
$[3;1,0]_0$ \,\,\, $\left((-K_S^2)=8\right)$,
\item
$[2;1,0]_0$ \,\,\, $\left((-K_S^2)=7\right)$,
\item
$[1;1,0]_0$, $[3;1,1]_+$ \,\,\, $\left((-K_S^2)=6\right)$,
\item
$[0;1,0]_0$, $[2;1,1]_+(a,b)$ \,\,\, $\left((-K_S^2)=5\right)$,
\item
$[1;1,1]_0$, $[1;1,1]_+(a,b)$ \,\,\, $\left((-K_S^2)=4\right)$.
\end{itemize}
We note that the types $[4;1,0]_0$, $[3;1,0]_0$, $[2;1,0]_0$, $[1;1,0]_0$, $[0;1,0]_0$ 
in \cite{N} are nothing but the types 
$\langle${\rm O}$\rangle_2$, $\langle${\rm I}$\rangle_2$, $\langle${\rm II}$\rangle_2$, 
$\langle${\rm III}$\rangle_2$, $\langle${\rm IV}$\rangle_2$
in Example \ref{fEO}, respectively. 
\item\label{ind23rmk2}
Assume that $a=3$. Then $S$ is isomorphic to the log del Pezzo surface associated 
to the bottom tetrad in the sense of \cite{FY} whose type is one of the 
following: 
\begin{itemize}
\item
\textbf{$[$6;2,0$]$} \,\,\, $\left((-K_S^2)=32/3\right)$,
\item
\textbf{$[$5;2,0$]$} \,\,\, $\left((-K_S^2)=29/3\right)$,
\item
\textbf{$[$4;2,0$]$} \,\,\, $\left((-K_S^2)=26/3\right)$,
\item
\textbf{$[$3;1,0$]$} \,\,\, $\left((-K_S^2)=25/3\right)$,
\item
\textbf{$[$5;2,1$]_1$} \,\,\, $\left((-K_S^2)=8\right)$,
\item
\textbf{$[$3;2,0$]$} \,\,\, $\left((-K_S^2)=23/3\right)$, 
\item
\textbf{$[$2;1,0$]$}, \textbf{$[$4;2,1$]_{1B}$} \,\,\, $\left((-K_S^2)=22/3\right)$, 
\item
\textbf{$[$4;2,1$]_{1A}$} \,\,\, $\left((-K_S^2)=7\right)$, 
\item
\textbf{$[$4;2,2$]_{1F}$} \,\,\, $\left((-K_S^2)=20/3\right)$, 
\item
\textbf{$[$1;1,0$]$}, \textbf{$[$2;2,0$]$}, \textbf{$[$3;2,1$]_{1B}$}, 
\textbf{$[$4;2,2$]_{1E}$} \,\, $\left((-K_S^2)=19/3\right)$, 
\item
\textbf{$[$3;2,1$]_{1A}$}, \textbf{$[$4;2,2$]_{1C}$}, \textbf{$[$4;2,2$]_{1D}$}, 
\,\, $\left((-K_S^2)=6\right)$.
\end{itemize}
We note that the types \textbf{$[$6;2,0$]$}, \textbf{$[$5;2,0$]$}, \textbf{$[$4;2,0$]$}, 
\textbf{$[$3;2,0$]$}, \textbf{$[$2;2,0$]$} 
in \cite{FY} are nothing but the types 
$\langle${\rm O}$\rangle_3$, $\langle${\rm I}$\rangle_3$, $\langle${\rm II}$\rangle_3$, 
$\langle${\rm III}$\rangle_3$, $\langle${\rm IV}$\rangle_3$
in Example \ref{fEO}, respectively. Moreover, the log del Pezzo surface of 
index three associated to the bottom tetrad of type \textbf{$[$3;1,0$]$} 
is isomorphic to the weighted projective plane $\pr(1,1,3)$. 
\end{enumerate}
\end{remark}

\subsection{Special log del Pezzo surfaces}\label{ex_section2}

In this section, we determine the log del Pezzo surfaces associated to the 
$a$-fundamental multiplets of types $\langle$O$\rangle_a$, $\langle$I$\rangle_a$ 
and $\langle$II$\rangle_a$. We freely use the notation of the toric geometry in this 
section. See \cite{fulton} for example. 
We fix a lattice $N:=\Z^{\oplus 2}$ and set 
$N_\R:=N\otimes_\Z\R(\simeq\R^{\oplus 2})$.

\begin{example}\label{dEO}
Fix $a\geq 2$. It is well-known that the weighted projective plane $\pr(1,1,2a)$ is 
a log del Pezzo surface of index $a$; the associated $a$-basic pair is equal to 
$(\F_{2a}, (a-1)\sigma)$. Thus the log del Pezzo surface associated to the 
$a$-fundamental multiplet of type $\langle$O$\rangle_a$ is isomorphic to $\pr(1,1,2a)$. 
\end{example}

\begin{example}\label{dEI}
Fix $a\geq 2$. Let $\Sigma_{{\rm I}, a}$ be the complete fan in $N_\R$ such that 
the set of the generators of one-dimensional cones in $\Sigma_{{\rm I}, a}$ is 
\[
\{(1, 0), (0, 1), (-1, -2a+1), (-1, -2a)\}.
\]
Let $S_{{\rm I}, a}$ be the projective toric surface associated to the fan 
$\Sigma_{{\rm I}, a}$. Then the minimal resolution $M_{{\rm I}, a}$ of 
$S_{{\rm I}, a}$ corresponds to the complete fan $\Sigma'_{{\rm I}, a}$ 
such that the set of the generators of one-dimensional cones 
in $\Sigma'_{{\rm I}, a}$ is 
\[
\{(1, 0), (0, 1), (-1, -2a+1), (-1, -2a), (0, -1)\}.
\]
Moreover, the divisor $-aK_{M_{{\rm I}, a}/S_{{\rm I}, a}}$ is equal to 
$(a-1)V(\R_{\geq 0}(0,-1))$, where $V(\R_{\geq 0}(0,-1))$ is the torus-invariant 
prime divisor associated to the cone $\R_{\geq 0}(0,-1)\in\Sigma'_{{\rm I}, a}$. 
We note that $M_{{\rm I}, a}$ is isomorphic to the variety $\F_{2a-1}$ 
blowing upped along a point on $\sigma$. Moreover, $V(\R_{\geq 0}(0, -1))$ 
corresponds to the strict transform of $\sigma\subset\F_{2a-1}$. 
Therefore the log del Pezzo surface associated to the $a$-fundamental multiplet 
of type $\langle$I$\rangle_a$ is isomorphic to $S_{{\rm I}, a}$. 
\end{example}

\begin{example}\label{dEII1}
Fix $a\geq 2$. Let $\Sigma_{{\rm II}_1, a}$ be the complete fan in $N_\R$ such that 
the set of the generators of one-dimensional cones in $\Sigma_{{\rm II}_1, a}$ is 
\[
\{(1, 0), (0, 1), (-1, -2a+2), (-1, -2a)\}.
\]
Let $S_{{\rm II}_1, a}$ be the projective toric surface associated to the fan 
$\Sigma_{{\rm II}_1, a}$. Then the minimal resolution $M_{{\rm II}_1, a}$ of 
$S_{{\rm II}_1, a}$ corresponds to the complete fan $\Sigma'_{{\rm II}_1, a}$ 
such that the set of the generators of one-dimensional cones 
in $\Sigma'_{{\rm II}_1, a}$ is 
\[
\{(1, 0), (0, 1), (-1, -2a+2), (-1, -2a+1), (-1, -2a), (0, -1)\}.
\]
Moreover, the divisor $-aK_{M_{{\rm II}_1, a}/S_{{\rm II}_1, a}}$ is equal to 
$(a-1)V(\R_{\geq 0}(0,-1))$. 
We note that the pair $(M_{{\rm II}_1, a}, (a-1)V(\R_{\geq 0}(0,-1)))$ is isomorphic to 
the $a$-basic pair associated to the $a$-fundamental multiplet of type 
$\langle$II$\rangle_a$ such that $\#|\Delta_1|=1$. 
Therefore the log del Pezzo surface associated to the $a$-fundamental multiplet of type 
$\langle$II$\rangle_a$ such that $\#|\Delta_1|=1$ is isomorphic to $S_{{\rm II}_1, a}$. 
\end{example}

\begin{example}\label{dEII2}
Fix $a\geq 2$. Let $\Sigma_{{\rm II}_2, a}$ be the complete fan in $N_\R$ such that 
the set of the generators of one-dimensional cones in $\Sigma_{{\rm II}_2, a}$ is 
\[
\{(1, 0), (0, 1), (-1, -2a+2), (-1, -2a+1), (1, -1)\}.
\]
Let $S_{{\rm II}_2, a}$ be the projective toric surface associated to the fan 
$\Sigma_{{\rm II}_2, a}$. Then the minimal resolution $M_{{\rm II}_2, a}$ of 
$S_{{\rm II}_2, a}$ corresponds to the complete fan $\Sigma'_{{\rm II}_2, a}$ 
such that the set of the generators of one-dimensional cones 
in $\Sigma'_{{\rm II}_2, a}$ is 
\[
\{(1, 0), (0, 1), (-1, -2a+2), (-1, -2a+1), (0, -1), (1, -1)\}.
\]
Moreover, the divisor $-aK_{M_{{\rm II}_2, a}/S_{{\rm II}_2, a}}$ is equal to 
$(a-1)V(\R_{\geq 0}(0,-1))$. 
We note that the pair $(M_{{\rm II}_2, a}, (a-1)V(\R_{\geq 0}(0,-1)))$ is isomorphic to 
the $a$-basic pair associated to the $a$-fundamental multiplet of type 
$\langle$II$\rangle_a$ such that $\#|\Delta_1|=2$. 
Therefore the log del Pezzo surface associated to the $a$-fundamental multiplet of type 
$\langle$II$\rangle_a$ such that $\#|\Delta_1|=2$ is isomorphic to $S_{{\rm II}_2, a}$. 
\end{example}

From the arguments in this section, Corollary \ref{maincor} is deduced from 
Theorem \ref{mainthm} immediately.

\section{Proof of Theorem \ref{mainthm}}\label{proof_section}

In this section, we prove Theorem \ref{mainthm}. From now on, let $a\geq 4$, 
$S$ be a log del Pezzo surface of index $a$ with $(-K_S^2)\geq 2a$, 
$(M_0, E_0)$ be the associated $a$-basic pair, $(M_b, E_b; \Delta_1,\dots,\Delta_b)$ be 
an $a$-fundamental multiplet of length $b$ with $1\leq b\leq a-1$ such that 
the associated $a$-basic pair is equal to $(M_0, E_0)$, and $L_b$ be the fundamental 
divisor of the multiplet. We remark that the existence of such multiplet is proven 
in Corollary \ref{bfcor}. By Proposition \ref{hirzprop}, if $M_b\simeq\F_n$ and 
$bK_{M_b}+L_b$ is non-big and non-trivial, then we can assume that 
$\Delta_b\cap\sigma=\emptyset$. We know that 
$a(-K_S^2)=(-K_{M_b}\cdot L_b)-\sum_{i=1}^bi\deg\Delta_i$ by Proposition 
\ref{fundprop} \eqref{fundprop3}.

\subsection{Structure of $M_b$}\label{proof1_section}

In this section, we prove the following lemma. 

\begin{lemma}\label{Mblem}
$M_b$ is isomorphic to $\F_n$ for some $n\geq 0$. 
\end{lemma}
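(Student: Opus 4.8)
The plan is to appeal to the dichotomy in Proposition \ref{fundprop} \eqref{fundprop2}, which says that since $(M_b, E_b; \Delta_1,\dots,\Delta_b)$ is an $a$-fundamental multiplet, $M_b$ is isomorphic to either $\pr^2$ or some $\F_n$. So the entire content of the lemma is to rule out the case $M_b \simeq \pr^2$. First I would assume for contradiction that $M_b = \pr^2$, and write $L_b \sim dH$ where $H$ is the line class, so that $E_b \sim -aK_{M_b} - L_b \sim (3a - d)H$; in particular $d \leq 3a$ and $E_b$ is a (possibly reducible) plane curve of degree $3a - d$ with all coefficients in $\{1, \dots, a-1\}$ (the coefficient condition being inherited from the $a$-basic pair through the construction, or at least $E_b$ nonzero effective).

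The next step is to extract strong numerical constraints from the hypothesis $(-K_S^2) \geq 2a$. Using $a(-K_S^2) = (-K_{M_b}\cdot L_b) - \sum_{i=1}^b i\deg\Delta_i$ from Proposition \ref{fundprop} \eqref{fundprop3}, together with $(-K_{M_b}\cdot L_b) = 3d$ on $\pr^2$, we get $3d - \sum_{i=1}^b i\deg\Delta_i \geq 2a^2$. On the other hand, the fundamental-multiplet condition \eqref{funddfn2} with $i = b$ and the non-nefness giving Proposition \ref{fundprop} \eqref{fundprop2}: we have $bK_{M_b} + L_b$ nef but $(b+1)K_{M_b} + L_b$ not nef, i.e. $(b+1)K_{M_b} + L_b$ has negative intersection with a line $l$, so $d - 3(b+1) < 0$, giving $d \leq 3b + 2 \leq 3a - 1$; and $bK_{M_b}+L_b$ nef means $d \geq 3b$. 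Combining $d \leq 3b+2$ with $3d \geq 2a^2 + \sum i\deg\Delta_i \geq 2a^2$ forces $3(3b+2) \geq 2a^2$, i.e. $9b + 6 \geq 2a^2$; since $b = \lfloor (a+1)/2\rfloor \leq (a+1)/2$ this gives roughly $9(a+1)/2 + 6 \geq 2a^2$, which fails for all $a \geq 4$ (one checks $a = 4$: $9\cdot 2 + 6 = 24 < 32$, and it only gets worse). Actually I should be careful that $b$ here is not yet known to equal $\lfloor(a+1)/2\rfloor$ — that identity is part of the conclusion of the main theorem, not an a priori fact — so instead I would use $b \leq a - 1$, giving $9(a-1) + 6 \geq 2a^2$, i.e. $2a^2 - 9a + 3 \leq 0$, which holds only for $a \leq 3$; for $a \geq 4$ we have $2a^2 - 9a + 3 = a(2a - 9) + 3 > 0$ when $a \geq 5$, and for $a = 4$: $32 - 36 + 3 = -1 \leq 0$ — so the crude bound is not quite enough at $a = 4$ and I will need to tighten it.

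To handle $a = 4$ (and to be safe for small cases generally), I would sharpen the estimate on $(-K_{M_b}\cdot L_b)$ using condition \eqref{basic_dfn4}-type orthogonality: $L_b$ is nef and big (Proposition \ref{fundprop} \eqref{fundprop3}) and by \eqref{funddfn2} we also have $bK_{M_b} + L_b$ nef, hence $((bK_{M_b}+L_b)\cdot L_b) \geq 0$, i.e. $d^2 \geq 3bd$, recovering $d \geq 3b$; more usefully $((bK_{M_b}+L_b)^2) \geq 0$ gives $(d - 3b)^2 \geq 0$ trivially, so I would instead use the big-and-nef-ness of $bK_{M_b}+L_b$ from Proposition \ref{fundprop} \eqref{fundprop1}: when $b \leq a - 2$ that proposition gives $bK_{M_b}+L_b$ nef and big, so $(d-3b)^2 > 0$ hence $d \geq 3b + 1$, and combined with $d \leq 3b + 2$ we get $d \in \{3b+1, 3b+2\}$ — a very rigid situation. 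Then $2a^2 \leq 3d - \sum i \deg\Delta_i \leq 3(3b+2) = 9b + 6 \leq 9(a-2) + 6 = 9a - 12$, so $2a^2 - 9a + 12 \leq 0$, which fails for every $a \geq 3$ (discriminant $81 - 96 < 0$). The remaining case $b = a - 1$ would be treated separately: then $\Delta_{a-1}$ with $i = a-1$ forces, by Lemma \ref{FS1} \eqref{FS11}, multiplicities of $E_b$ of size $\geq a - (a-1) = 1$ only — so that's weak; instead for $b = a-1$ I use $d \leq 3b + 2 = 3a - 1$ directly, and $E_b \sim (3a-d)H$ with $3a - d \geq 1$, plus $2a^2 \leq 3d \leq 3(3a-1) = 9a - 3$, giving $2a^2 - 9a + 3 \leq 0$, false for $a \geq 5$, and for $a = 4$ with $b = 3$: need a separate small check, e.g. noting $d = 3a - 1 = 11$ forces $E_b \sim H$ a single line with coefficient $1 \leq a - 1 = 3$, and then tracing through the elimination of $\Delta_1, \Delta_2, \Delta_3$ using Lemma \ref{FS1} and Example \ref{E1} to reach a contradiction with \eqref{basdfn2} or with $E_0$ being nonzero effective.

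The main obstacle I anticipate is exactly this endgame bookkeeping for the borderline small cases ($a = 4$, and $a = 5$, $b = a-1 = 4$), where the purely quadratic inequality $2a^2 - 9a + c \leq 0$ is not decisively violated and one must instead use the finer structure — the coefficient bound $\coeff E_b \subset \{1,\dots,a-1\}$, the $(\nu 1)$-condition on each $\Delta_j$, and the explicit formulas of Examples \ref{E1} and \ref{E2} together with Lemma \ref{FS1} — to rule out the handful of surviving numerical profiles $(d, \deg\Delta_1, \dots, \deg\Delta_b)$ on $\pr^2$. Everything else is a short chain of inequalities; the write-up will mostly consist of setting up $L_b \sim dH$, recording $3b \leq d \leq 3b+2$ and $\sum_{i=1}^b i \deg\Delta_i \leq 3d - 2a^2$, and then checking the finitely many leftover $(a,b,d)$ by hand.
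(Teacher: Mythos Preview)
Your overall strategy matches the paper's: reduce to $M_b=\pr^2$, write $L_b\sim dH$, use $d\leq 3b+2$ and $2a^2\leq 3d-\sum_i i\deg\Delta_i$ to force $2a^2\leq 9(a-1)+6$, hence $a=4$; then finish that case by hand. Two points deserve correction.

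First, your attempt to sharpen via ``$bK_{M_b}+L_b$ nef and big from Proposition~\ref{fundprop}\,\eqref{fundprop1}'' is a misreading. That item has the hypothesis that $(i+1)K_{M_i}+L_i$ is \emph{nef}, which is exactly the negation of the fundamental-multiplet condition at $i=b$; you cannot invoke it here, and in fact $bK_{M_b}+L_b$ need not be big (this is precisely why Proposition~\ref{hirzprop} exists). Fortunately this detour is unnecessary: once $a=4$, the crude inequality $32\leq 9b+6$ already forces $b=3$ (so your case ``$b\leq a-2$'' is vacuous), and then $32\leq 3d\leq 33$ forces $d=11$ and $\sum_i i\deg\Delta_i\leq 1$.

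Second, for the surviving case $(a,b,d)=(4,3,11)$ the paper does not trace through eliminations as you propose; it uses the identity from Proposition~\ref{fundprop}\,\eqref{fundprop3},
\[
(L_b\cdot E_b)=\sum_{i=1}^{b} i(a-i)\deg\Delta_i,
\]
which here gives $11=(L_b\cdot E_b)=\sum_{i=1}^{3} i(4-i)\deg\Delta_i\leq 4\sum_{i=1}^{3} i\deg\Delta_i\leq 4$, an immediate contradiction. Your plan of checking $E_0$ directly would also work (with $\sum_i i\deg\Delta_i\leq 1$ one has $\Delta_2=\Delta_3=\emptyset$ and $\deg\Delta_1\leq 1$, and either way $E_0$ fails to be effective or violates $(\sC4)$), but the intersection-number argument is the clean one-line finish you were looking for.
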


\begin{proof}
Assume not. By Proposition \ref{fundprop} \eqref{fundprop2}, we can assume that 
$M_b=\pr^2$. Set $h\in\Z_{>0}$ such that $L_b\sim hl$, where $l$ is a line. 
By Proposition \ref{fundprop}, $h\leq 3b+2$ and 
$2a^2\leq a(-K_S^2)=3h-\sum_{i=1}^bi\deg\Delta_i\leq 3(3b+2)\leq 3(3a-1)$. 
Thus $a=4$, $b=a-1=3$, $h=3b+2=11$ and $\sum_{i=1}^bi\deg\Delta_i\leq 1$. 
However, $11=(L_b\cdot E_b)=\sum_{i=1}^3i(4-i)\deg\Delta_i
\leq 4\sum_{i=1}^3i\deg\Delta_i\leq 4$. This leads to a contradiction. 
\end{proof}

Therefore, we can assume that $M_b=\F_n$. 
Set $h_0$, $h\in\Z$ such that $L_b\sim h_0\sigma+hl$. Then 
$E_b\sim(2a-h_0)\sigma+((n+2)a-h)l$ and 
$a(-K_S^2)=-nh_0+2h_0+2h-\sum_{i=1}^bi\deg\Delta_i$. 
We note that $b=\lfloor h_0/2\rfloor$ by Proposition \ref{fundprop} \eqref{fundprop2}. 
Moreover, $h\geq nh_0$ and $h_0\geq 1$ since $L_b$ is nef and big. 
In particular, $0<h_0<2a$.

\subsection{Determine the value $h_0$}\label{proof2_section}

In this section, we prove that $h_0=a+1$ and $b=\lfloor(a+1)/2\rfloor$. 
To begin with, we see the following claim.

\begin{claim}\label{h0claim}
We have $h_0\geq a+1$. 
\end{claim}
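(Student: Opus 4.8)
\textbf{Plan for the proof of Claim \ref{h0claim}.}
The strategy is to argue by contradiction: assume $h_0 \leq a$ and derive a numerical contradiction from the hypothesis $(-K_S^2) \geq 2a$, i.e.\ $a(-K_S^2) = -nh_0 + 2h_0 + 2h - \sum_{i=1}^b i\deg\Delta_i \geq 2a^2$. The key inputs are Proposition \ref{fundprop} \eqref{fundprop3} together with the inequalities $h \geq nh_0$ and $h_0 \geq 1$, and the structural fact from Proposition \ref{fundprop} \eqref{fundprop2} that $(bK_{M_b}+L_b\cdot l)<0$, which translates (writing $L_b \sim h_0\sigma + hl$ and $K_{\F_n} \sim -2\sigma - (n+2)l$) into a bound on $b$ in terms of $h_0$, namely $b = \lfloor h_0/2\rfloor$. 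First I would write $E_b \sim (2a-h_0)\sigma + ((n+2)a-h)l$ and extract from $(L_b \cdot E_b) = \sum_{i=1}^b i(a-i)\deg\Delta_i \geq 0$ and from effectivity of $E_b$ the constraints $2a - h_0 \geq 0$ and $(n+2)a - h \geq 0$ (or handle the $\sigma$-component carefully if $E_b$ contains $\sigma$ with some coefficient).

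The core of the argument is an upper bound on $2h$. Since $L_b$ is nef and $E_b \geq 0$, intersecting $L_b$ with a fiber $l$ gives $h_0 = (L_b \cdot l) \geq 0$, and intersecting with $\sigma$ gives $(L_b \cdot \sigma) = h - nh_0 \geq 0$. To bound $h$ from above I would use that $E_b$ is effective and that $L_b \sim -aK_{M_b} - E_b$: from the $l$-coefficient, $2a - h_0 \geq 0$ controls the $\sigma$-part, but I need the $l$-part too. The cleanest route: compute $a(-K_S^2) = (-K_{M_b} \cdot L_b) - \sum_{i=1}^b i\deg\Delta_i$ directly. We have $(-K_{\F_n} \cdot L_b) = (2\sigma + (n+2)l \cdot h_0\sigma + hl) = -2nh_0 + 2h + (n+2)h_0 = nh_0 + 2h_0 + 2h$. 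Wait—recomputing with $(\sigma^2) = -n$, $(\sigma \cdot l) = 1$, $(l^2)=0$: this gives $2h_0(-n) + 2h + (n+2)h_0 = -2nh_0 + 2h + nh_0 + 2h_0 = -nh_0 + 2h_0 + 2h$, matching the formula in the text. So $a(-K_S^2) = -nh_0 + 2h_0 + 2h - \sum i\deg\Delta_i$. Now since $E_b \geq 0$ has $\sigma$-coefficient $2a - h_0$ and $E_b$ being effective forces, via $(E_b \cdot \sigma) \geq 0$ when $\sigma \not\leq E_b$ or via Lemma \ref{2alem} when $\sigma \leq E_b$, a bound relating $h$, $n$, $h_0$, $a$. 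The main obstacle I anticipate is precisely this: getting a sharp enough upper bound on $2h - nh_0$ (equivalently on $(L_b \cdot \sigma)$ plus $h_0$), because $n$ can be large and $h$ grows with $n$; I expect one must use that some component of $E_b$ is contracted or has bounded self-intersection (Lemma \ref{2alem} gives $d \leq 2a/(a-e)$ for components of $E_0$, which pulls back to constrain $E_b$).

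Concretely, I would split into cases according to whether $\sigma \leq E_b$. If $\sigma \not\leq E_b$, then $(E_b \cdot \sigma) \geq 0$ gives $(2a-h_0)(-n) + ((n+2)a - h) \geq 0$, i.e.\ $h \leq (n+2)a - n(2a-h_0) = 2a - na + nh_0$, hence $2h - nh_0 \leq 4a - 2na + nh_0 \leq 4a$ (using $h_0 \leq 2a$ and $n \geq 0$, with care when $n$ is small). Then $a(-K_S^2) \leq 2h_0 + 4a - \sum i\deg\Delta_i \leq 2a + 4a = 6a < 2a^2$ once $a \geq 4$—contradiction. If $\sigma \leq E_b$, write $e := \coeff_\sigma E_b \in \{1,\dots,a-1\}$; then the fundamental divisor condition $(L_b \cdot \sigma^0) = 0$ on the associated basic pair, combined with Lemma \ref{2alem} applied to $\sigma^0$ (giving $-(({\sigma^0})^2) \leq 2a/(a-e)$), lets me bound $n$ and hence $h$ in terms of $a$ and $h_0$; pushing this through should again yield $a(-K_S^2) < 2a^2$ unless $h_0 \geq a+1$. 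The combinatorial bookkeeping of the $\deg\Delta_i$ contributions (which only help, since they are subtracted) and the small-$n$ edge cases ($n = 0, 1$) are the routine-but-fiddly parts; the genuinely delicate step is the case $\sigma \leq E_b$ with $n$ large, where I must exploit that $\sigma$ or its transform is a contracted curve of bounded self-intersection.
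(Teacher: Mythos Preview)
Your overall strategy --- contradiction from $h_0\le a$, bound $h$ and $n$, and split on whether $\sigma\le E_b$ --- is sound, and your treatment of the case $\sigma\not\le E_b$ is correct and clean. The paper does essentially the same thing without the case split, using instead the single bound $\coeff_\sigma E_b\le a-1$ (which holds in either case) to get $h\le 2a+n(h_0-1)$; this dispatches $h_0\le a-1$ immediately, just as your sketch would.

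The genuine gap is in your hard case $\sigma\le E_b$ with $h_0=a$ and $n$ large. Your proposed tool --- Lemma \ref{2alem} applied to $\sigma^0$ --- only yields $n\le 2a/(a-e)\le 2a$ (taking $e=\coeff_\sigma E_b\le a-1$), and combining this with $h\le 2a+ne$ gives at best
\[
a(-K_S^2)\le -na+2a+2(2a+ne)=6a+n(2e-a)\le 6a+2a(a-2)=2a^2+2a,
\]
which does \emph{not} contradict $a(-K_S^2)\ge 2a^2$. So the self-intersection bound on $\sigma^0$ is too weak by itself; your remark that the $\deg\Delta_i$ terms ``only help, since they are subtracted'' is precisely where the plan goes wrong, because those terms are what closes the gap.

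The paper's missing idea is this: once one knows $h_0=a$ and $2a-4\le n\le 2a$, one checks that $(n+2)a-h<2n$ while $\coeff_\sigma E_b\le a-1<a$; since $E_b\sim a\sigma+((n+2)a-h)l$, this forces $E_b$ to contain a section $C\ne\sigma$ (there are not enough fibers to account for the $\sigma$-class). For such $C$ one has $(L_b\cdot C)\ge h$, and Proposition \ref{fundprop} \eqref{fundprop3} gives $(L_b\cdot C)=\sum_i i\deg(\Delta_i\cap C^i)\le\sum_i i\deg\Delta_i$. Feeding this back into the volume formula yields $h\ge 2a^2+a(n-2)$, which contradicts $h\le(n+2)a-n$. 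So the $\Delta_i$ contributions are not merely subtracted noise: the inequality $(L_b\cdot C)\le\sum_i i\deg\Delta_i$, available only because $C$ is an honest component of $E_b$, is the lever that finishes the proof.
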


\begin{proof}
Assume that $h_0\leq a$. Since $\coeff_\sigma E_b\leq a-1$, we have 
$h\leq 2a+n(h_0-1)$. Since $h\geq nh_0$, this implies that $n\leq 2a$. 
If $h_0\leq a-1$, then 
$a(-K_S^2)\leq -nh_0+2h_0+4a+2n(h_0-1)\leq(n+2)(a-1)+4a-2n\leq 2a^2-2$, 
which leads to a contradiction. Thus $h_0=a$ (under the assumption $h_0\leq a$). 
If $n\leq 2a-5$, then $a(-K_S^2)\leq -na+2a+4a+2n(a-1)=(a-2)n+6a\leq 2a^2-3a+10$, 
which leads to a contradiction. Thus $2a-4\leq n\leq 2a$. In particular, $n\geq a$. 
Moreover, $h\geq 2a+(a-2)n$ since $h\geq an\geq an+2(a-n)$. 
If $h=2a+(a-2)n$, then $a=n=4$ and $h=16$. However, in this case, 
$a(-K_S^2)\leq -4\cdot 4+2\cdot 4+2\cdot 16<32=2\cdot 4^2$, which leads to 
a contradiction. Thus we have $h>2a+(a-2)n$. Since 
$E_b\sim a\sigma+((n+2)a-h)l$, $\coeff_\sigma E_b\leq a-1$ and $(n+2)a-h<2n$, 
there exists a section $C\leq E_b$ apart from $\sigma$. 
By Proposition \ref{fundprop} \eqref{fundprop3}, 
\begin{eqnarray*}
h & \leq & (L_b\cdot C)=\sum_{i=1}^bi\deg(\Delta_i\cap C^i)
\leq\sum_{i=1}^bi\deg\Delta_i\\
 & = & -nh_0+2h_0+2h-a(-K_S^2)\leq a(2-n)+2h-2a^2. 
\end{eqnarray*}
Thus $h\geq 2a^2+a(n-2)$. However, we know that $h\leq (n+2)a-n$, which leads to 
a contradiction. 
\end{proof}

By Claim \ref{h0claim}, $h_0\geq a+1$. Since $E_b$ is effective, we have $h\leq (n+2)a$. 
Since $h\geq nh_0$, we have $n\leq 2a/(h_0-a)$. 
From now on, we assume that $h_0\geq a+2$. Then 
\begin{eqnarray*}
a(-K_S^2) & \leq & n(2a-h_0)+2h_0+4a-\sum_{i=1}^bi\deg\Delta_i\\
 & \leq & \frac{2h_0^2}{h_0-a}-\sum_{i=1}^bi\deg\Delta_i\leq\frac{2h_0^2}{h_0-a}
\leq (a+2)^2.
\end{eqnarray*}
Thus $a=4$. In this case, $h_0=6$ or $7$. 
Assume that $n\leq 2$. Then 
$a(-K_S^2)\leq n(2a-h_0)+2h_0+4a-\sum_{i=1}^bi\deg\Delta_i
\leq 2(8-h_0)+2h_0+16=32$. Thus $n=2$, $h=(n+2)a=16$ and 
$\sum_{i=1}^bi\deg\Delta_i=0$. However, in this case, $(L_b\cdot E_b)=2(8-h_0)^2\neq 0$, 
which leads to a contradiction. Hence $n\geq 3$. 
Thus $h_0=6$, $b=3$, and $n=3$ or $4$ since $n\leq 2a/(h_0-a)$. 
Assume that $h<a(n+2)=4(n+2)$. Since $a(-K_S^2)=6(2-n)+2h-\sum_{i=1}^bi\deg\Delta_i
\geq 32$, we have $h\geq 4(n+2)-2$ and $\sum_{i=1}^bi\deg\Delta_i\leq 2$. 
We note that there exists a fiber $l\leq E_b$. By Proposition 
\ref{fundprop} \eqref{fundprop3}, 
$6=h_0=(L_b\cdot l)=\sum_{i=1}^bi\deg(\Delta_i\cap l^i)\leq\sum_{i=1}^bi\deg\Delta_i\leq 2$, which leads to contradiction. Thus $h$ must be equal to $4(n+2)$. In particular, 
$L_3\sim 6\sigma+4(n+2)l$, $E_3=2\sigma$, $\sum_{i=1}^3i(4-i)\deg\Delta_i=16-4n$ 
and $\sum_{i=1}^3i\deg(\Delta_i\cap\sigma^i)=8-2n$. 

Assume that $n=3$. Then $\Delta_1=\emptyset$, $\Delta_3=\emptyset$, 
$\deg\Delta_2=1$ and $\Delta_2\subset\sigma^2$. Then the associated log del Pezzo 
surface is isomorphic to $S_{{\rm I}, 2}$ having described in Example \ref{dEI}. 
However, the index of $S_{{\rm I}, 2}$ is equal to two, a contradiction. 

Assume that $n=4$. Then $\Delta_1=\emptyset$, $\Delta_2=\emptyset$ and 
$\Delta_3=\emptyset$. then the associated log del Pezzo surface is isomorphic to 
$\pr(1,1,4)$. However, the index of $\pr(1,1,4)$ is equal to two (see Example \ref{dEO}), 
a contradiction. 

Consequently, we have $h_0=a+1$ and $b=\lfloor h_0/2\rfloor=\lfloor(a+1)/2\rfloor$.

\subsection{Determine the values $h$ and $n$}\label{proof3_section}

In this section, we prove that $h\geq 2a^2-2a-2$ and $2a-4\leq n\leq 2a$. 
To begin with, we see the following claim.

\begin{claim}\label{sigmaclaim}
We have $\sigma\leq E_b$. 
\end{claim}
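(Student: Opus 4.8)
The plan is to show that $\sigma$ (the minimal section of $M_b=\F_n$) must appear in $E_b$ with positive coefficient, using the constraints accumulated so far: $h_0=a+1$, $b=\lfloor(a+1)/2\rfloor$, $n\leq 2a/(h_0-a)=2a$, and the volume bound $a(-K_S^2)=-nh_0+2h_0+2h-\sum_{i=1}^b i\deg\Delta_i\geq 2a^2$. First I would write $E_b\sim(2a-h_0)\sigma+((n+2)a-h)l=(a-1)\sigma+((n+2)a-h)l$, so the coefficient of $\sigma$ in the \emph{linear equivalence class} is $a-1>0$; but I must rule out the possibility that the effective divisor $E_b$ is supported elsewhere (e.g. $E_b$ contains a different section $C$ with $(C^2)=-n$ is impossible for $n>0$ since the minimal section is unique, but $E_b$ could conceivably be a sum of fibers plus a section in a different class, or involve bisections). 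The cleanest route is: suppose $\sigma\not\leq E_b$; then since $E_b$ is effective and $E_b\cdot l=2a-h_0=a-1>0$, $E_b$ has a horizontal component, and every irreducible horizontal curve $C\leq E_b$ other than $\sigma$ satisfies $(C\cdot\sigma)\geq 0$ hence $(C^2)\geq 0$ or $C\cdot l\geq 2$ if $C$ meets $\sigma$ — in any case $(C\cdot l)\geq 1$ and $(L_b\cdot C)\geq 0$.

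The key inequality to exploit is Proposition \ref{fundprop}\eqref{fundprop3}: for any nonsingular component $C\leq E_b$, $(L_b\cdot C)=\sum_{i=1}^b i\deg(\Delta_i\cap C^i)\leq\sum_{i=1}^b i\deg\Delta_i$, and the latter sum is bounded by $-nh_0+2h_0+2h-a(-K_S^2)\leq -n(a+1)+2(a+1)+2h-2a^2$. Meanwhile, if $C$ is a section in the class $\sigma+kl$ with $k\geq n$ (the possible horizontal classes on $\F_n$ that could be components when $\sigma\not\leq E_b$), then $(L_b\cdot C)=h_0 k+h-n h_0\cdot(\text{something})$ — more precisely $(L_b\cdot(\sigma+kl))=h_0(\sigma^2+k)+h=h_0(k-n)+h\geq h$. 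Combining, $h\leq (L_b\cdot C)\leq\sum i\deg\Delta_i\leq -n(a+1)+2(a+1)+2h-2a^2$, which rearranges to $h\geq 2a^2+n(a+1)-2(a+1)=2a^2+(n-2)(a+1)$. But effectivity forces $h\leq(n+2)a$, giving $2a^2+(n-2)(a+1)\leq(n+2)a$, i.e. $2a^2-2a-2\leq 2a+n(a-1)-na$... I would carefully simplify this to extract a contradiction for all $a\geq 4$ and all admissible $n$ (recalling $n\leq 2a$, and separating the case $n=0$ where $\sigma$ is not unique and a symmetric argument on the other section is needed).

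The main obstacle will be handling the case $n=0$ cleanly (where $\F_0=\pr^1\times\pr^1$ has two rulings and no distinguished section, so "$\sigma\leq E_b$" must be interpreted as: $E_b$ contains a curve of self-intersection $0$ from one of the rulings, and by symmetry one reduces to the previous analysis after possibly swapping factors), and the subcase where $E_b$'s horizontal part is a \emph{bisection} (class $2\sigma+kl$) rather than a sum of sections — there one uses $\coeff E_b\subset\{1,\dots,a-1\}$ together with $E_b\cdot l=a-1$ to see the horizontal multiplicity is at most $a-1$, and the bisection case gives an even stronger bound on $(L_b\cdot C)\geq 2h+(\text{nonneg})$, making the contradiction easier. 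I expect the writeup to first dispose of $n\geq 1$ via the inequality chain above, then note that for $n=0$ one invokes the fact that $L_b$ is nef and big and $((b+1)K_{M_b}+L_b\cdot l)<0$ for \emph{both} rulings would force too large a volume, pinning down that $E_b$ must contain a ruling curve, after which the argument is identical. A residual technical point is justifying that $E_b$ cannot be purely vertical: if $E_b=\sum c_j l_j$ then $L_b\cdot l_j=0$ by (C4)-type conditions would force $h_0=0$, contradicting $h_0=a+1$.

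\begin{proof}
Suppose to the contrary that $\sigma\not\leq E_b$. We have
\[
E_b\sim (2a-h_0)\sigma+((n+2)a-h)l=(a-1)\sigma+((n+2)a-h)l,
\]
so $(E_b\cdot l)=a-1>0$ and $E_b$ is effective, hence $E_b$ has at least one irreducible horizontal component.

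First we observe $n\geq 1$; for if $n=0$, then $M_b=\F_0=\pr^1\times\pr^1$ has two rulings, and by Proposition \ref{fundprop}\eqref{fundprop2} we have $((b+1)K_{M_b}+L_b\cdot l)<0$ for a fiber $l$ of the chosen ruling. Writing $L_b\sim h_0 f_1+h f_2$ for the two ruling classes $f_1,f_2$ with $h_0\leq h$, the hypothesis that $bK_{M_b}+L_b$ is nef forces $h_0\geq 2b\geq a$ by the same computation as $b=\lfloor h_0/2\rfloor$; but then after interchanging the two factors if necessary we may assume $h_0\leq h$ and run the argument below with $\sigma$ replaced by a fiber of the second ruling (which then plays the role of the minimal section, as $\F_0$ with a marked fiber is a degeneration of $\F_n$ for the present purposes). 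So assume $n\geq 1$; then the minimal section $\sigma$ is the unique irreducible curve with negative self-intersection on $M_b$.

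Every irreducible horizontal component $C\leq E_b$ with $C\neq\sigma$ has $(C\cdot\sigma)\geq 0$, so writing $C$ in the class $d\sigma+kl$ with $d\geq 1$ we get $k\geq dn$, hence
\[
(L_b\cdot C)=(h_0\sigma+hl\cdot d\sigma+kl)=-dnh_0+dh_0 k\cdot 0+\ldots
\]
more precisely $(L_b\cdot C)=h_0(kd\text{-term})$; carrying out the intersection on $\F_n$ gives $(L_b\cdot C)=d h + h_0 k - d n h_0 = d h + h_0(k-dn)\geq d h\geq h$ since $h_0\geq 1$ and $k\geq dn$. On the other hand, $C$ is nonsingular (it is a section or, if $d\geq 2$, we pass to a suitable irreducible component; in any case we may choose $C$ nonsingular since $|E_b|$ is simple normal crossing by $(\sC 2)$), so by Proposition \ref{fundprop}\eqref{fundprop3},
\[
(L_b\cdot C)=\sum_{i=1}^b i\deg(\Delta_i\cap C^i)\leq\sum_{i=1}^b i\deg\Delta_i.
\]
Combining the two displays with the volume identity $a(-K_S^2)=-nh_0+2h_0+2h-\sum_{i=1}^b i\deg\Delta_i$ and $h_0=a+1$, we obtain
\[
h\leq\sum_{i=1}^b i\deg\Delta_i=-n(a+1)+2(a+1)+2h-a(-K_S^2)\leq -n(a+1)+2(a+1)+2h-2a^2,
\]
which rearranges to
\[
h\geq 2a^2+(n-2)(a+1).
\]
Since $E_b$ is effective, $(n+2)a-h\geq 0$, i.e. $h\leq(n+2)a$. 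Therefore
\[
2a^2+(n-2)(a+1)\leq (n+2)a,
\]
that is, $2a^2-2a-2+na+n\leq na+2a$, i.e. $2a^2-4a-2\leq n-n=0$ after cancellation of the $na$ terms, wait: $2a^2+na+n-2a-2\leq na+2a$ gives $2a^2+n-2a-2\leq 2a$, so $n\leq 4a+2-2a^2=-2a^2+4a+2$. For $a\geq 4$ the right-hand side is negative ($-2\cdot 16+16+2<0$), contradicting $n\geq 1$. This contradiction shows $\sigma\leq E_b$.
\end{proof}
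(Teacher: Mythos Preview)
Your argument has a genuine gap. You invoke condition $(\sC 2)$ to claim that $|E_b|$ is simple normal crossing, and hence that a horizontal component $C\leq E_b$ may be taken nonsingular. But $(\sC 2)$ is part of Definition~\ref{basdfn} for $a$-\emph{basic pairs} $(M_0,E_0)$; for $a$-fundamental multiplets the only requirement on $E_b$ is $(\sF 1)$ in Definition~\ref{funddfn}, namely that $E_b$ be a nonzero effective divisor. Nothing prevents a horizontal component of $E_b$ on $\F_n$ from being, say, an irreducible nodal bisection. Without nonsingularity of $C$ you cannot apply the third bullet of Proposition~\ref{fundprop}\,\eqref{fundprop3}, so your key inequality $(L_b\cdot C)\leq\sum_{i=1}^b i\deg\Delta_i$ is unjustified.

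The paper's proof sidesteps this entirely and is much shorter. If $\sigma\not\leq E_b$ then, since $E_b$ is effective, $(E_b\cdot\sigma)\geq 0$; with $E_b\sim(a-1)\sigma+((n+2)a-h)l$ this reads $(n+2)a-h\geq(a-1)n$, i.e.\ $h\leq 2a+n$. Substituting into $a(-K_S^2)\leq(-n+2)(a+1)+2h$ yields $a(-K_S^2)\leq n(1-a)+6a+2\leq 6a+2<2a^2$ for $a\geq 4$, a contradiction. No component of $E_b$ is ever inspected, so the nonsingularity issue never arises.

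Two minor remarks. First, your separate treatment of $n=0$ is unnecessary: your own final inequality $n\leq -2a^2+4a+2$ is negative for $a\geq 4$, so it already contradicts $n\geq 0$; the preliminary paragraph on $\F_0$ does not actually settle anything and only defers to ``the argument below''. Second, the exposition inside the proof (the ``wait:'' and the recomputation mid-display) should be cleaned up; but the substantive problem is the unjustified appeal to $(\sC 2)$.
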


\begin{proof}
If $\sigma\not\leq E_b$, then $(n+2)a-h\geq (a-1)n$ since 
$E_b\sim(a-1)\sigma+((n+2)a-h)l$. However, in this case, 
$a(-K_S^2)\leq(-n+2)(a+1)+2(2a+n)=n(1-a)+6a+2\leq 6a+2<2a^2$, which leads to 
a contradiction. 
\end{proof}

By Proposition \ref{fundprop} \eqref{fundprop3}, we have 
\begin{eqnarray*}
-n(a+1)+h & = & (L_b\cdot\sigma)=\sum_{i=1}^bi\deg(\Delta_i\cap\sigma^i)
\leq\sum_{i=1}^bi\deg\Delta_i\\
 & = & (-n+2)(a+1)+2h-a(-K_S^2)\\
 & \leq & (-n+2)(a+1)+2h-2a^2.
\end{eqnarray*}
Hence $h\geq 2a^2-2a-2$. Since $(a+1)n\leq h\leq(n+2)a$, we have 
$2a-4\leq n\leq 2a$.

\subsection{The case $h=(n+2)a$}\label{proof4_section}

We consider the case $h=(n+2)a$. In this case, 
$L_b\sim(a+1)\sigma+(n+2)al$, $E_b=(a-1)\sigma$, 
$2a-n=(L_b\cdot\sigma)=\sum_{i=1}^bi\deg(\Delta_i\cap\sigma^i)$, 
$(a-1)(2a-n)=(L_b\cdot E_b)=\sum_{i=1}^bi(a-i)\deg\Delta_i$ 
and $a(-K_S^2)=(n-(2a-4))(a-1)+2a^2+6-\sum_{i=1}^bi\deg\Delta_i$. 
In particular, $\sum_{i=1}^bi\deg\Delta_i\leq 6+(n-(2a-4))(a-1)$. 

If $\Delta_2=\emptyset,\dots,\Delta_b=\emptyset$, then $\deg\Delta_1=2a-n$ and 
$\Delta_1\subset\sigma^1$. For the case $n=2a$ (resp.\ $n=2a-1$, $n=2a-2$, 
$n=2a-3$, $n=2a-4$), the $a$-fundamental multiplet 
$(M_b, E_b; \Delta_1,\dots,\Delta_b)$ is of type 
$\langle$O$\rangle_a$ (resp.\ $\langle$I$\rangle_a$, $\langle$II$\rangle_a$, 
$\langle$III$\rangle_a$, $\langle$IV$\rangle_a$). 

From now on, we assume that $\Delta_{i_0}\neq\emptyset$ for some $2\leq i_0\leq b$. 
We can assume that $\Delta_j=\emptyset$ for any $j\geq i_0+1$. 
Then $\Delta_{i_0}\cap\sigma^{i_0}\neq\emptyset$. Thus 
$i_0\leq 2a-n\leq 4$. 

Assume that $i_0=4$. Then $2a-n=4$, $\deg(\Delta_4\cap\sigma^4)=1$ and 
$\sum_{i=1}^bi\deg\Delta_i\leq 6$. Let $\Gamma\subset M_3$ be the 
$\pi_4$-exceptional $(-1)$-curve. Then $\Gamma\leq E_3$ and $\sum_{i=1}^3i\deg(\Delta_i\cap\Gamma^i)=4$ by Lemma \ref{FS1}. This leads to 
a contradiction. 

Assume that $i_0=3$. Since $b\geq 3$, we have $a\geq 5$. 
Moreover, $\deg(\Delta_3\cap\sigma^3)=1$, $\deg(\Delta_2\cap\sigma^2)=0$ 
and $\deg(\Delta_1\cap\sigma^1)=2a-n-3\leq 1$. If $\deg\Delta_3\geq 2$ (and $i_0=3$), 
then $a=5$ and $\deg\Delta_3=2$ by Lemma \ref{FS1} \eqref{FS14}. 
However, this contradicts to the assumption $\Delta_b\cap\sigma=\emptyset$. 
Assume that $\deg\Delta_3=1$ (and $i_0=3$). 
Let $\Gamma\subset M_2$ be the $\pi_3$-exceptional $(-1)$-curve. 
Since $\coeff_\Gamma E_2=2$, we have $\Delta_2=\emptyset$ and $|\Delta_1|=\{P\}$, 
where $P=\sigma^1\cap\Gamma^1$. 
Since $\sum_{i=1}^2i\deg(\Delta_i\cap\Gamma^i)=3$, we have $\mult_P(\Delta_1\cap\Gamma^1)=3$. This contradicts to Lemma \ref{FS2} and 
the fact $a\geq 5$. 

Assume that $i_0=2$. If $\deg(\Delta_2\cap\sigma^2)=1$, then $\deg\Delta_2=1$ 
by Lemma \ref{FS1} \eqref{FS14}. In this case, $\coeff_\Gamma E_1=1$ and 
$\deg(\Delta_1\cap\Gamma)=2$, where $\Gamma\subset M_1$ is the 
$\pi_2$-exceptional $(-1)$-curve. For $P:=\sigma^1\cap\Gamma$, 
$\mult_P(\Delta_1\cap\Gamma)=2$ and $\mult_P(\Delta_1\cap\sigma^1)=1$. 
This contradicts to Lemma \ref{FS2}. Thus $\deg(\Delta_2\cap\sigma^2)=2$. 
In particular, $n=2a-4$ and $\Delta_1\cap\sigma^1=\emptyset$. 
Thus $a=4$, $|\Delta_2|=\{P\}$ such that $\mult_P\Delta_2=3$, 
$\mult_P(\Delta_2\cap\sigma^2)=2$ and $\Delta_1=\emptyset$ by Lemma \ref{FS3} 
and the fact $\sum_{i=1}^bi\deg\Delta_i\leq 6$. Then the $4$-fundamental 
multiplet $(M_2, E_2; \Delta_1, \Delta_2)$ is of type $\langle$B$\rangle_4$.

\subsection{The case $h=(n+2)a-1$}\label{proof5_section}

We consider the case $h=(n+2)a-1$. In this case, $n\leq 2a-1$, 
$L_b\sim(a+1)\sigma+((n+2)a-1)l$, $E_b=(a-1)\sigma+l$, 
$2a-n-1=(L_b\cdot\sigma)=\sum_{i=1}^bi\deg(\Delta_i\cap\sigma^i)$, 
$a+1=(L_b\cdot l)=\sum_{i=1}^bi\deg(\Delta_i\cap l^i)$, 
$4a-2-(a-1)(n-(2a-4))=(L_b\cdot E_b)=\sum_{i=1}^bi(a-i)\deg\Delta_i$ and 
$a(-K_S^2)=(n-(2a-4))(a-1)+2a^2+4-\sum_{i=1}^bi\deg\Delta_i$. 
In particular, $\sum_{i=1}^bi\deg\Delta_i\leq 4+(n-(2a-4))(a-1)$. Hence $n\geq 2a-3$. 

Assume that $\Delta_i\cap\sigma^i=\emptyset$ for all $i\geq 2$. Then 
$\Delta_i=\emptyset$ for all $i\geq 2$. However, by Lemma \ref{FS2}, 
$P\not\in\Delta_1$, where $P=\sigma^1\cap l^1$. Thus $\Delta_1\cap l^1=\emptyset$, 
which leads to a contradiction. Thus $n=2a-3$, $\deg(\Delta_2\cap\sigma^2)=1$ 
and $\deg(\Delta_i\cap\sigma^i)=0$ for all $i\neq 2$. Then $|\Delta_2|=\{P\}$ with 
$P=\sigma^2\cap l^2$ by Lemma \ref{FS2}. Moreover, since $E_1$ is effective, 
by Example \ref{E2}, $\deg\Delta_2=\deg(\Delta_2\cap l^2)$ and 
$\deg\Delta_2\leq 3$ (if $a=4$); $\deg\Delta_2\leq 2$ (if $a=5$); $\deg\Delta_2=1$ 
(if $a\geq 6$), unless $(a, \deg\Delta_2, \deg(\Delta_2\cap l^2))=(4,2,1)$. 
If $(a, \deg\Delta_2, \deg(\Delta_2\cap l^2))=(4,2,1)$, then 
$\Delta_1\cap l^1=\emptyset$, a contradiction. 
Thus $\deg(\Delta_2\cap l^2)=\deg\Delta_2$. If $\deg\Delta_2=1$, then 
$\deg\Delta_1=(a+3)/(a-1)$ and $\deg(\Delta_1\cap l^1)=a-1$, a contradiction. 
If $\deg\Delta_2=3$, then $\deg(\Delta_1\cap l^1)=a-5=-1$, a contradiction. 
Thus $\deg\Delta_2=2$. In this case, $\deg\Delta_1=(-a+7)/(a-1)$ and 
$\deg(\Delta_1\cap l^1)=a-3$. Thus $a=4$, $\deg\Delta_1=\deg(\Delta_1\cap l^1)=1$. 
However, in this case, $E_1=3\sigma^1+2\Gamma_{P, 1}+\Gamma_{P, 2}+l^1$. Thus 
$\Delta_1\cap l^1=\emptyset$. This leads to a contradiction. Therefore 
$h\neq(n+2)a-1$.

\subsection{The case $h\leq(n+2)a-2$}\label{proof6_section}

We consider the case $h\leq(n+2)a-2$. In this case, $n\leq 2a-2$. 
Assume that $h\leq(n+2)a-3$. Then $n=2a-3$ and $h=2a^2-a-3$ since 
$h\geq 2a^2-2a-2$. In this case, $E_b\sim(a-1)\sigma+3l$ and 
$a(-K_S^2)=2a^2+a-1-\sum_{i=1}^bi\deg\Delta_i$. Hence there exists a fiber $l\leq E_b$. 
Thus $a+1=(L_b\cdot l)=\sum_{i=1}^bi\deg(\Delta_i\cap l^i)\leq
\sum_{i=1}^bi\deg\Delta_i\leq a-1$, which leads to a contradiction. Thus $h$ must 
be equal to $(n+2)a-2$. In this case, 
$L_b\sim(a+1)\sigma+((n+2)a-2)l$, $E_b\sim(a-1)\sigma+2l$, 
$2a-n-2=(L_b\cdot\sigma)=\sum_{i=1}^bi\deg(\Delta_i\cap\sigma^i)$, 
$4a-(a-1)(n-(2a-4))=(L_b\cdot E_b)=\sum_{i=1}^bi(a-i)\deg\Delta_i$ and 
$a(-K_S^2)=(n-(2a-4))(a-1)+2a^2+2-\sum_{i=1}^bi\deg\Delta_i$. 
In particular, $\sum_{i=1}^bi\deg\Delta_i\leq 2+(n-(2a-4))(a-1)$. 
We note that there exists a fiber $l\leq E_b$ and 
$a+1=(L_b\cdot l)=\sum_{i=1}^bi\deg(\Delta_i\cap l^i)\leq\sum_{i=1}^bi\deg\Delta_i
\leq 2a$ for such $l$. Thus $E_b=(a-1)\sigma+2l$. 
Moreover, we have $n\geq 2a-3$. Indeed, if $n=2a-4$, then 
$a+1\leq\sum_{i=1}^bi\deg\Delta_i\leq 2$, a contradiction. 

If $n=2a-2$, then $0=(L_b\cdot\sigma)=\sum_{i=1}^bi\deg(\Delta_i\cap\sigma^i)$. 
Since $a-b\geq 2$, we have $a-b=2$, $\Delta_b\subset l^b\setminus\sigma^b$, 
$\Delta_1=\emptyset,\dots,\Delta_{b-1}=\emptyset$ by Lemma \ref{FS1} \eqref{FS13}. 
In particular, $b+3=a+1=b\deg\Delta_b$. Thus $a=5$, $b=3$ and $\deg\Delta_3=2$. 
Then the $5$-fundamental multiplet $(M_3, E_3; \Delta_1, \Delta_2, \Delta_3)$ is 
of type $\langle$A$\rangle_5$. 

The remaining case is that $n=2a-3$. Since $a+1=\sum_{i=1}^bi\deg(\Delta_i\cap l^i)
\leq\sum_{i=1}^bi\deg\Delta_i\leq a+1$, we have $\sum_{i=1}^bi\deg\Delta_i=a+1$ and 
$\Delta_i\subset l^i$ for any $i$. 
Since $1=\sum_{i=1}^bi\deg(\Delta_i\cap\sigma^i)$, $\deg(\Delta_1\cap\sigma^1)=1$ and 
$\Delta_i\cap\sigma^i=\emptyset$ for any $i\geq 2$. 
Set $P=\sigma^1\cap l^1$. 
Since $\sum_{i=1}^bi\deg(\Delta_i\cap l^i)=\sum_{i=1}^bi\deg\Delta_i$, 
we have $P\in\Delta_1$. 
By Lemma \ref{FS2}, either $(a, \mult_P\Delta_1)=(4,3)$ or $(5,2)$ holds. 
If $(a, \mult_P\Delta_1)=(5,2)$, then $\deg\Delta_2=2$ and $\Delta_3=\emptyset$ 
since $6=\sum_{i=1}^3i\deg\Delta_i$. However, $\Delta_2=\emptyset$ by 
Lemma \ref{FS1} \eqref{FS13}, a contradiction. 
Thus $(a, \mult_P\Delta_1)=(4,3)$. In this case, $\deg\Delta_2=1$ since 
$\sum_{i=1}^2i\deg\Delta_i=5$. Then the $4$-fundamental multiplet 
$(M_2, E_2; \Delta_1, \Delta_2)$ is of type $\langle$C$\rangle_4$. 

As a consequence, we have complete the proof of Theorem \ref{mainthm}.

\end{document}